\newtheorem{theorem}{Theorem}[section]
\newtheorem{corollary}[theorem]{Corollary}
\newtheorem{lemma}[theorem]{Lemma}
\newtheorem{proposition}[theorem]{Proposition}
\newtheorem{remark}[theorem]{Remark}
\def\11{\textbf{$1$}}
\begin{document}

\title[Orthogonal forms and orthogonality preservers revisited]{Orthogonal forms and orthogonality preservers on real function algebras revisited}

\author[A.M. Peralta]{Antonio M. Peralta}

\address{Departamento de An{\'a}lisis Matem{\'a}tico, Facultad de
Ciencias, Universidad de Granada, 18071 Granada, Spain.}
\email{aperalta@ugr.es}

\thanks{Author partially supported by the Spanish Ministry of Economy and Competitiveness project no. MTM2014-58984-P and Junta de Andaluc\'{\i}a grant FQM375.}

\subjclass[2010]{Primary 46H40; 4J10, Secondary 47B33; 46L40; 46E15; 47B48.}

\keywords{Orthogonal form, real C$^*$-algebra, orthogonality preservers, disjointness preserver, separating map}

\date{}

\begin{abstract} In \cite{GarPe2014} we determine the precise form of a continuous orthogonal form on a commutative real C$^*$-algebra. We also describe the general form of a (not-necessarily continuous) orthogonality preserving linear map between commutative unital real C$^*$-algebras. Among the consequences, we show that every orthogonality preserving linear bijection between commutative unital real C$^*$-algebras is continuous. In this note we revisit these results and their proofs with the idea of filling a gap in the arguments, and to extend the original conclusions.
\end{abstract}

\maketitle
\thispagestyle{empty}

\section{Introduction}

Let $A$ be a real or complex C$^*$-algebra. Elements $a,b$ in $A$ are said to be \emph{orthogonal} (written $a\perp b$) if $a b^*=b^* a=0$. A bilinear form $V: A\times A \to \mathbb{C}$ is said to be \emph{orthogonal} if $V(a,b)=0$ whenever $a\perp b$. In \cite{GarPe2014} we establish a generalization of a celebrated result due to S. Goldstein (see \cite[Theorem 1.10]{Gold}) by proving the following result:

\begin{theorem}\label{t orhtogonal forms real abelian}\cite[Theorem 2.4]{GarPe2014}
Let $V: A\times A \to \mathbb{R}$ be a continuous orthogonal form on a commutative real C$^*$-algebra,
then there exist $\varphi_1$ and $\varphi_2$ in $A^{*}$ satisfying $$V(x,y) = \varphi_1 (x y) + \varphi_2 ( x y^*),$$ for every $x,y\in A.$
\end{theorem}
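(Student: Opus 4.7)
The strategy is to complexify and reduce to the classical Goldstein theorem for commutative complex C*-algebras. Let $A_\CC := A \oplus iA$ denote the complexification of $A$, a commutative complex C*-algebra with involution $(a+ib)^* := a^* - ib^*$, and extend $V$ to a complex bilinear form $\widetilde V : A_\CC \times A_\CC \to \CC$ by
\[
\widetilde V(x_1+ix_2,\, y_1+iy_2) := V(x_1,y_1) - V(x_2,y_2) + i\bigl(V(x_1,y_2) + V(x_2,y_1)\bigr).
\]
This form inherits continuity from $V$, and once it is known to be orthogonal on $A_\CC$, Goldstein's theorem \cite{Gold} supplies continuous linear functionals $\psi_1, \psi_2 \in (A_\CC)^*$ with $\widetilde V(a,b) = \psi_1(ab) + \psi_2(ab^*)$. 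Restricting to $A \times A$ and taking real parts produces the desired $\varphi_j := \mathrm{Re}\,\psi_j|_A \in A^*$.

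The genuine difficulty, and in my view the natural home of the gap being filled, lies in verifying that $\widetilde V$ is orthogonal on $A_\CC$. Unpacking $(x_1+ix_2)(y_1+iy_2)^* = 0$ yields the two real identities $x_1 y_1^* + x_2 y_2^* = 0$ and $x_2 y_1^* = x_1 y_2^*$, which do \emph{not} individually force any of the pairs $(x_i, y_j)$ to be orthogonal in $A$; hence orthogonality of $\widetilde V$ cannot be obtained by termwise inspection of the defining formula. To handle this, I would invoke the Gelfand--Naimark theorem for commutative real C*-algebras, realizing
\[
A \cong \{f \in C_0(L, \CC) : f \circ \tau = \overline f\}
\]
for a locally compact Hausdorff space $L$ equipped with a continuous involutive self-homeomorphism $\tau$. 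Working in this model, a partition of unity on $L$ subordinate to a cover by $\tau$-invariant open sets should let one approximate $a$ and $b$ uniformly by sums of elements with sufficiently localized supports, so that the two algebraic identities above force genuine componentwise orthogonalities in $A$, to which the hypothesis on $V$ applies. Passing to the limit via continuity of $\widetilde V$ then completes the step.

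The main obstacle is precisely this localization: it requires juggling the topology of $L$, the $\tau$-compatibility condition defining $A$ inside $C_0(L,\CC)$, and the approximation of orthogonal pairs in $A_\CC$ by pieces honestly orthogonal componentwise in $A$. Every other ingredient---the complexification, the invocation of Goldstein's theorem, and the passage from $\psi_j$ to $\varphi_j$---is routine once this hurdle is cleared. I would expect the revisited proof in the present note to carry out exactly such a localization, perhaps replacing the partition-of-unity step by a more intrinsic spectral argument that avoids any appeal to termwise orthogonality in $A$.
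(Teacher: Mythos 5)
Your reduction hinges on the claim that the complex-bilinear extension $\widetilde V$ of an orthogonal form $V$ is again orthogonal on the complexification $A_{\mathbb{C}}$, and you correctly flag this as the crux; unfortunately the claim is false, and no localization argument can rescue it. Take $A=C(K)^{\tau}\equiv \mathbb{C}_{_{\mathbb{R}}}$ with $K=\{t_1,t_2\}$ and $\sigma$ the swap (the algebra of Remark 3.1 of the present note). Since $\mathbb{C}_{_{\mathbb{R}}}$ contains no nontrivial orthogonal pairs, \emph{every} real bilinear form on $A$ is orthogonal, so $V$ is completely unconstrained. Yet $A_{\mathbb{C}}\cong \mathbb{C}\oplus^{\infty}\mathbb{C}$ contains the orthogonal pair $e_1=(1,0)$, $e_2=(0,1)$; writing $e_1=x_1+ix_2$, $e_2=y_1+iy_2$ with $x_1=y_1=\tfrac12$, $x_2=-\tfrac{i}{2}$, $y_2=\tfrac{i}{2}$ in $\mathbb{C}_{_{\mathbb{R}}}$, your formula gives
$$\widetilde V(e_1,e_2)=\tfrac14\bigl(V(1,1)+V(i,i)\bigr)+\tfrac{i}{4}\bigl(V(1,i)-V(i,1)\bigr),$$
which is nonzero for, say, $V(x,y)=\Re\hbox{e}(x)\,\Re\hbox{e}(y)$ (an orthogonal form for which the theorem nevertheless holds, with $\varphi_1=\varphi_2=\tfrac12\Re\hbox{e}$). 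Your proposed fix --- covering by $\sigma$-invariant open sets and forcing componentwise orthogonality --- cannot work here because the only $\sigma$-invariant open sets are $\emptyset$ and $K$; this is exactly the pathology at points with $\sigma(t)\neq t$ that motivates the whole revision.

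The paper therefore does not complexify. It works intrinsically with the real structure: after passing to the Borel envelope $B(K)^{\tau}$, it splits elements into symmetric and skew parts, approximates these by simple elements built from $\chi_{_D}$ (with $\sigma(D)=D$) and $u_{_B}=i(\chi_{_B}-\chi_{_{\sigma(B)}})$ (with $B\cap\sigma(B)=\emptyset$) via Lemma \ref{l spectral resolution for skew symmetric}, establishes the orthogonality relations among these building blocks (Proposition \ref{p extn Boral algebra orthogonal}), and computes $V$ on simple elements directly. This produces four functionals attached to $xy$, $xy^*$, $x^*y$, $x^*y^*$, which are then recombined into the two functionals $\varphi_1,\varphi_2$ of the statement. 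The routine outer layers of your argument (restricting functionals, taking real parts) are fine, but the core reduction to Goldstein's complex theorem is not available.
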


We recently realized the presence of a ``gap'' affecting some of the technical results given in \cite{GarPe2014}. The concrete difficulties appear in the following arguments: By the Gelfand theory for commutative real C$^*$-algebras, every commutative unital real C$^*$-algebra $A$ is C$^*$-isomorphic (and hence isometric) to a real function algebra of the form $C(K)^{\tau}=\{f\in C(K) : \tau (f) =f\}$, where $K$ is a compact Hausdorff space, $\tau$ is a conjugation (i.e. a conjugate linear isometry of period 2) on $C(K)$ given by $\tau (f) (t) =\overline{f(\sigma(t))}$ ($ t\in K$), and $\sigma : K\to K$ is a topological involution (i.e. a period 2 homeomorphism) (compare \cite[Proposition 5.1.4]{Li}). Let $\sigma: K \to K$ be a topological involution on a compact Hausdorff space $K$. Clearly, the sets $N=\{t\in K : \sigma (t) \neq t\}$ and $F=\{t\in K : \sigma (t) = t\}$ are open and closed subsets of $K$, respectively. It is established in \cite[Lemma 2.1 and its proof]{GarPe2014} that the family,  $\mathcal{F}$, of all open subsets $O\subseteq K$ such that $O\cap \sigma( O)=\emptyset,$ ordered by inclusion is an inductive set, and hence, by Zorn's lemma, there exists an open subset $\mathcal{O}\subset \mathcal{F}$ maximal with respect to the property $\mathcal{O}\cap \sigma (\mathcal{O}) = \emptyset.$ Immediately after \cite[Lemma 2.1]{GarPe2014} it is claimed that, 

\emph{``by the maximality of $\mathcal{O}$, $K=F \stackrel{\circ}{\cup} \mathcal{O} \stackrel{\circ}{\cup} \sigma (\mathcal{O}).$''}\smallskip

Unfortunately, the above equality is not always true. Consider, for example, $K=\mathbb{T}$ the unit sphere of $\mathbb{C}$ and $\sigma: K\to K$, $\sigma( t) =-t$ ($t\in \mathbb{T}$). In this case $F=\emptyset$ and $\mathcal{O} =\{t\in \mathbb{T} : \Im\hbox{m} (t) > 0\}$ is a maximal set in $\mathcal{F}$, but $F \stackrel{\circ}{\cup} \mathcal{O} \stackrel{\circ}{\cup} \sigma (\mathcal{O}) \neq K$.\label{page 1} This gap affects several statements and proofs of technical results in \cite[Sections 2 and 3]{GarPe2014}.\smallskip

We recall that a mapping $T: A\to B$ between real or complex C$^*$-algebras is said to be \emph{orthogonality or disjointness preserving} if $a\perp b$ in $A$ implies $T(a)\perp T(b)$ in $B.$\smallskip

In the second main goal studied in \cite{GarPe2014}, we consider orthogonality preserving linear maps between real function algebras belonging to a special subclass of the category of commutative unital real C$^*$-algebras. The algebras in this particular subclass can be presented as follows: Let $F$ be a closed subspace of a compact Hausdorff space $K$. We denote by $C_r (K) = C_r(K;F)$ the real C$^*$-algebra of all continuous functions $f : K \to \mathbb{C}$ taking real values on $F$. The main result in \cite[\S 3]{GarPe2014} is presented in Theorem 3.2, where we establish a complete description of those linear (not necessarily continuous) orthogonality preserving operators between $C_r(K;F)$-spaces. It should be remarked here that the proof of this result is not affected by the gap commented above. Among the consequences not affected by the difficulties, we obtain the following result.

\begin{theorem}\label{p inverse preserves invertible}\cite[Remark 3.4, proof of Theorem 3.5 and Proposition 3.6]{GarPe2014} In the notation above, let $T: C_r(K_1;F_1)\to C_r(K_2;F_2)$ be an orthogonality preserving linear bijection. Then $T$ is automatically continuous and $T^{-1}$ preserves invertible elements, that is, $T^{-1} (g)$ is invertible whenever $g$ is an invertible element in $C_r(L_2)$. $\hfill\Box$
\end{theorem}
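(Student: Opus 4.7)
The plan is to deduce both conclusions from the structural description of orthogonality preserving linear maps given in \cite[Theorem 3.2]{GarPe2014}, whose proof (as the note itself emphasizes) is unaffected by the gap discussed above. That theorem expresses any orthogonality preserving linear $T: C_r(K_1;F_1)\to C_r(K_2;F_2)$ in weighted-composition form: on a suitable cozero-type subset of $K_2$, there are a weight $h: K_2 \to \CC$ and a continuous ``support map'' $\phi$ into $K_1$ (possibly composed with a conjugation on part of $K_2$, and respecting the real structures $F_1, F_2$) such that $T(f)$ is essentially $h\cdot (f\circ \phi)$, up to a conjugation twist on a piece of $K_2$.

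Armed with this form, I would first use the \emph{injectivity} of $T$ to force $h$ to be nowhere zero on $K_2$: at any point $t_0$ with $h(t_0)=0$, a bump-like $f$ supported near $\phi(t_0)$ would yield a nonzero element of $\ker T$, contradicting injectivity. Next I would use the \emph{surjectivity} of $T$ to force $\phi$ to be a homeomorphism from $K_2$ onto $K_1$ compatible with the real structures: if $\phi$ were not injective, two distinct points of $K_2$ would have proportional $T$-images at every $f$, obstructing surjection onto functions that separate those points; if $\phi$ were not surjective, peak functions near a missed point of $K_1$ would lie outside the range of $T$. With $h = T(\mathbf{1})$ continuous, nowhere zero on the compact space $K_2$ (hence bounded below), and $\phi$ a homeomorphism, the weighted-composition form immediately yields $\|T(f)\|_\infty \leq \|h\|_\infty \|f\|_\infty$, so $T$ is automatically continuous.

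For the second conclusion, the explicit form of $T^{-1}$ is then again a weighted composition operator, with weight $1/(h\circ \phi^{-1})$, continuous and nowhere zero, composed with the homeomorphism $\phi^{-1}$ and the conjugation twist on the appropriate sheet. Since an element of $C_r(K_2;F_2)$ is invertible precisely when it is bounded below in modulus, and this property is preserved by a nowhere-vanishing bounded weighted composition, $T^{-1}(g)$ is invertible whenever $g$ is. The main obstacle will be the careful interplay with the conjugation twist and with the real-structure sets $F_1, F_2$, which must be tracked through the injectivity/surjectivity arguments and which is precisely where the Gelfand-theoretic subtleties of the real setting (and the gap highlighted earlier) make the analysis delicate.
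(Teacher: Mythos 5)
Your overall strategy---invoke the structural description of orthogonality preserving linear maps from \cite[Theorem 3.2]{GarPe2014} and then exploit injectivity and surjectivity---is indeed the route the cited proof takes (the paper itself gives no argument here beyond pointing to \cite[Remark 3.4, proof of Theorem 3.5 and Proposition 3.6]{GarPe2014}). But there is a genuine gap at the decisive step. You correctly note that the weighted-composition representation $T(f) = h\cdot(f\circ\phi)$ (with a conjugation twist) is only guaranteed ``on a suitable cozero-type subset of $K_2$'': for a \emph{not necessarily continuous} orthogonality preserver, the structure theorem partitions $K_2$ into a regular part, where the functionals $\delta_s T$ are bounded and the weighted-composition form holds, and a singular part $Z_2=\{s\in K_2 : \delta_s T \hbox{ unbounded}\}$, about which one only knows that its support set in $K_1$ is finite and consists of non-isolated points. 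Your continuity argument (``the weighted-composition form immediately yields $\|T(f)\|_\infty\leq\|h\|_\infty\|f\|_\infty$'') silently assumes this representation holds on \emph{all} of $K_2$, i.e.\ that $Z_2=\emptyset$ --- but that is precisely the content of automatic continuity, so the argument is circular. Neither your injectivity step (forcing $h$ nowhere zero) nor your surjectivity step (forcing $\phi$ to be a homeomorphism) rules out $Z_2\neq\emptyset$.

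The missing ingredient is the density argument that occupies the cited proof (and is reproduced, in the more general $C(K)^{\tau}$ setting, in the proof of Theorem \ref{t automatic cont bijective OP} of this note): one shows that $Z_1$ is closed, hence $Z_2$ is open; chooses via Urysohn a nonzero $g$ with $\hbox{coz}(g)\subseteq Z_2$ and a preimage $h$ with $T(h)=g$; proves that $h$ vanishes on $\mathcal{S}\hbox{upp}(Z_1)$, which is dense in $K_1$ because $\mathcal{S}\hbox{upp}(K_2)$ is dense and $\mathcal{S}\hbox{upp}(Z_2)$ is a finite set of non-isolated points; and concludes $h=0$, contradicting injectivity. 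Without some version of this step your proof of continuity does not go through. The second half of your proposal (that $T^{-1}$ is again a nowhere-vanishing weighted composition and hence preserves invertibility) is fine, but only \emph{after} continuity and the global weighted-composition form have been legitimately established.
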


However, it must be remarked that the main goal expected from these results (that is, \cite[Theorem 3.5]{GarPe2014}) is directly jeopardized by the mistake introduced after \cite[Lemma 2.1]{GarPe2014}. The concrete obstacle appears because by applying the wrong identity commented above, we identify every commutative unital real C$^*$-algebra with a real function algebra of the form $C_r(K;F)$. More concretely, let $T:C(K_1)^{\tau_1} \to C(K_2)^{\tau_2}$ be an orthogonality preserving linear mapping, and let $\sigma_i : K_i\to K_i$ be a topological involution satisfying $\tau_i (f) = \overline{f\circ \sigma_i}$. According to what is claimed in \cite[page 287]{GarPe2014}:

\emph{``$\dots$ keeping in mind the notation in the previous section, we write $L_i := \mathcal{O}_i\cup F_i,$ where $\mathcal{O}_i$ and $F_i=\{t\in K_i : \sigma_i (t) =t\}$ are the subsets of $K_i$ given by Lemma \cite[Lemma 2.1]{GarPe2014}. The map sending each $f$ in  $C(K_i)^{\tau_i}$ to its restriction to $L_i$ is a C$^*$-isomorphism (and hence a surjective linear isometry) from $C(K_i)^{\tau_i}$ onto the real C$^*$-algebra $C_r(L_i)$ of all continuous functions $f : L_i \to \mathbb{C}$ taking real values on $F_i$. Thus, studying orthogonality preserving linear maps between $C(K)^{\tau}$ spaces is equivalent to study orthogonality preserving linear mappings between the corresponding $C_r(L)$-spaces.''}

Unfortunately, since, in general, $K_i\neq \mathcal{O}_i\cup F_i \cup \sigma(\mathcal{O}_i)$, we cannot guarantee that $L_i := \mathcal{O}_i\cup F_i,$ is a closed subset of $K_i$ (compare the example given in page \pageref{page 1}). Furthermore, there are examples of $C(K)^{\tau}$-spaces which are not real $C^*$-isomorphic to a real C$^*$-algebra of the form $C_r(X,F)$ (compare \ref{remark C(K)tau not isomorphic to C(K,f)}). In summary, the proof of \cite[Theorem 3.5]{GarPe2014} is only valid for orthogonality preserving linear bijections between commutative unital real C$^*$-algebras which are of the form $C_r(K,F)$.\smallskip

Though the main results in \cite{GarPe2014} remain valid in the form they are stated, the gap commented above makes invalid some of the arguments given in that paper. It is necessary to provide a complete and correct argument, which fix all the problems. We present here a complete revision of these results with new and complete arguments, which allow us to solve and fix the problems caused by the gap in the original proof. The problems caused in \cite[\S 2]{GarPe2014} are easily fixable and we shall just comment briefly the necessary changes in Section 2 here. However, the difficulties caused in the proof of the result asserting the automatic continuity of every orthogonality preserving linear bijection between commutative unital real C$^*$-algebras force us to present a more detailed revision in Section 3.

\section{Orthogonal forms on real C$^*$-algebras}\label{sec:1}

We revise in this section the proof of \cite[Theorem 2.4]{GarPe2014}. For conciseness reasons, we keep the notation in \cite{GarPe2014} without inserting explicit definitions. We shall confine ourselves to state the minimum changes necessary to fix the difficulties in \cite[\S 2]{GarPe2014}.\smallskip

Lemma 2.2 in \cite{GarPe2014} should be rewritten as follows:
\setcounter{section}{2}
\setcounter{theorem}{1}

\begin{lemma}\label{l spectral resolution for skew symmetric} In the notation of Lemma 2.1, let $B(A)= B(K)^{\tau}$, let $a\in B(K)^{\tau}_{sa},$ and let $b$ be an element in $B(A)_{skew}$. Then the following statements hold:\begin{enumerate}[$a)$]
\item  $b|F = 0$;
\item For each $\varepsilon >0$, there exist mutually disjoint Borel
sets $B_{1},\ldots,B_m$ and real numbers $\lambda_1,\ldots,\lambda_m$ satisfying $\sigma(B_j) \cap B_j=\emptyset$ and
$\displaystyle{\left\| b-\sum_{j=1}^{m}  i  \ \lambda_j (\chi_{_{B_j}} -  \chi_{_{\sigma(B_j)}})\right\| <\varepsilon;}$
\item For each $\varepsilon >0$, there exist mutually disjoint Borel
sets $C_{1},\ldots,C_m \subset K$ and real numbers $\mu_1,\ldots,\mu_m$ satisfying $\sigma (C_j)= C_j$ and
$\displaystyle{\left\| a-\sum_{j=1}^{m} \mu_j \chi_{_{C_j}}\right\| <\varepsilon.}$
\end{enumerate}
\end{lemma}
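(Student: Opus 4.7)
I would first unpack what membership in $B(K)^\tau$ imposes on self-adjoint and skew elements when viewed as bounded Borel functions on $K$. Since $\tau(f)(t)=\overline{f(\sigma(t))}$, a self-adjoint $a\in B(K)^\tau_{sa}$ is exactly a bounded real-valued Borel function with $a\circ\sigma=a$, while a skew element $b\in B(A)_{skew}$ satisfies $b=ih$ for a bounded real-valued Borel function $h$ on $K$ satisfying $h\circ\sigma=-h$. With this dictionary, part $(a)$ is immediate: if $t\in F$, then $\sigma(t)=t$ forces $h(t)=-h(t)$, hence $b(t)=0$.

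For part $(b)$, fix $\varepsilon>0$, set $M=\|h\|_\infty$, choose $n\in\NN$ with $M/n<\varepsilon$, and put $\lambda_j=jM/n$ for $j=0,1,\dots,n-1$. Define $B_j = h^{-1}\bigl([\lambda_j,\lambda_{j+1})\bigr)\cap\{h>0\}$ and $B_0 = h^{-1}\bigl((0,\lambda_1)\bigr)$ (absorbing the small-positive piece into a term with coefficient $0$). Each $B_j$ is a Borel subset of $K$, and since $\sigma$ is a homeomorphism, $\sigma(B_j)$ is Borel as well. The antisymmetry $h\circ\sigma=-h$ means that $t\in\sigma(B_j)$ if and only if $h(t)\in(-\lambda_{j+1},-\lambda_j]$, so $B_j\cap\sigma(B_j)=\emptyset$, and the sets $\{B_j,\sigma(B_j)\}_j$ together with $\{h=0\}$ (which contains $F$ by $(a)$) partition $K$. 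On $B_j$ we have $|h-\lambda_j|<\varepsilon$, on $\sigma(B_j)$ we have $|h-(-\lambda_j)|<\varepsilon$, and on $\{h=0\}$ the error is $0$. Multiplying by $i$ then yields the required inequality
\[\Bigl\| b-\sum_{j=1}^{n-1} i\lambda_j\bigl(\chi_{B_j}-\chi_{\sigma(B_j)}\bigr)\Bigr\|<\varepsilon.\]

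Part $(c)$ is the symmetric counterpart and is easier. Since $a$ is real-valued and $\sigma$-invariant, the preimages $C_j=a^{-1}\bigl([\mu_j,\mu_{j+1})\bigr)$ for an equispaced partition $\mu_0<\mu_1<\cdots<\mu_m$ of an interval containing $a(K)$ with mesh less than $\varepsilon$ are Borel and automatically $\sigma$-invariant: $t\in C_j\iff a(t)\in[\mu_j,\mu_{j+1})\iff a(\sigma(t))\in[\mu_j,\mu_{j+1})\iff \sigma(t)\in C_j$, so $\sigma(C_j)=C_j$. The uniform estimate $\|a-\sum_j \mu_j\chi_{C_j}\|<\varepsilon$ is then the standard simple-function approximation.

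The only nontrivial point, and the one where the original argument in \cite{GarPe2014} ran into the gap commented on in the introduction, is keeping track of the sets $B_j,\sigma(B_j)$ and $C_j$ as genuine Borel sets in $K$ without invoking a decomposition of $K$ as $F\stackrel{\circ}{\cup}\mathcal{O}\stackrel{\circ}{\cup}\sigma(\mathcal{O})$. I expect no further obstacle, because the antisymmetry $h\circ\sigma=-h$ provides the disjointness in $(b)$ intrinsically, and the $\sigma$-invariance of $a$ provides the invariance in $(c)$, so the maximal set $\mathcal{O}$ of Lemma 2.1 is never actually needed in the construction.
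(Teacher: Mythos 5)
Your proof is correct and follows essentially the same route the paper intends (it simply notes that the level-set argument of \cite[proof of Lemma 2.2]{GarPe2014} survives once the conclusion is weakened to the intrinsic condition $\sigma(B_j)\cap B_j=\emptyset$, exactly the point you make): writing skew elements as $ih$ with $h$ real and $h\circ\sigma=-h$ yields $(a)$ and the disjointness in $(b)$, and the $\sigma$-invariance of real-valued $a$ yields $(c)$. The only loose end is trivial: your half-open intervals $[\lambda_j,\lambda_{j+1})$ miss the top level set $h^{-1}(\{M\})$ (and likewise $a^{-1}(\{\max a\})$ in $(c)$), so the last interval should be taken closed.
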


The arguments given in \cite[Comments before Lemma 2.1 and Proof of Lemma 2.2]{GarPe2014} remain valid here.\smallskip

The notation in \cite[Lemma 2.3]{GarPe2014} should be replaced with the following: For each $C\subseteq K$ with $C\cap \sigma (C)=\emptyset$ we shall write $u_{_C}= i  \ (\chi_{_C}-\chi_{_{\sigma(C)}}).$ The symbol $u_{_0}$ will stand for the projection $\chi_{_{K\backslash F}}.$ Clearly,
$1=\chi_{_F}+ u_{_0}$ where $1$ is the unit element in $B(K)^{\tau}.$ By Lemma \ref{l spectral resolution for skew symmetric} $a)$, for each  $b\in B(K)^{\tau}_{skew}$ we have $b \perp \chi_{_F},$ and so $b = b u_{_0}.$ The statement of \cite[Proposition 2.3]{GarPe2014} should be modified in the following sense:

\begin{proposition}\label{p extn Boral algebra orthogonal}
Let $K$ be a compact Hausdorff space, $\tau$
a period-2 conjugate-linear isometric $^*$-homomorphism on $C(K)$,
$A= C(K)^{\tau}$, and $V: A \times A \to \mathbb{R}$ be an orthogonal bounded
bilinear form whose Arens extension is denoted by $V^{**} :
A^{**}\times A^{**} \to \mathbb{R}$. Let $\sigma: K \to K$ be a period-2 homeomorphism
satisfying $\tau (a) (t) =  \overline{a (\sigma (t))},$ for all $t\in K$, $a\in C (K)$.
Then the following assertions hold for all Borel subsets $D,B,C$ of $K$ with
$\sigma(B)\cap B=\sigma(C) \cap C = \emptyset$ and $\sigma(D) =D$:
\begin{enumerate}[$a)$]
\item $V(\chi_{_D},u_{_B})=V(u_{_B},\chi_{_D})=0,$ whenever $D\cap
B=\emptyset$;

\item $V(u_{_B},u_{_C})=0,$ whenever $B\cap C=\emptyset;$

\item
$V((u_{_0}-u_{_C}u^*_{_C})u_{_B},u_{_C})=V(u_{_C},(u_{_0}-u_{_C}u^*_{_C})u_{_B})=0.$
\end{enumerate}
\end{proposition}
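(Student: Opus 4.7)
My approach is to reduce each identity to a support-disjointness question in $B(A)=B(K)^{\tau}$, using that the Arens extension $V^{**}\colon A^{**}\times A^{**}\to\mathbb{R}$ is orthogonal on $B(A)$: any pair of elements whose supports (as Borel subsets of $K$) are disjoint is killed by $V^{**}$.

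For part $(a)$: from $\sigma(D)=D$ and $D\cap B=\emptyset$ one immediately gets $D\cap\sigma(B)=\sigma(D\cap B)=\emptyset$. Thus the support $D$ of $\chi_D$ is disjoint from the support $B\cup\sigma(B)$ of $u_B$, so $\chi_D\perp u_B$ in $B(A)$ and both pairings vanish.

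For part $(c)$: since $u_C^{*}=-u_C$, I compute $u_Cu_C^{*}=-u_C^{2}=\chi_{C\cup\sigma(C)}$; and from $B\cap\sigma(B)=\emptyset$ one has $B\subseteq K\setminus F$, hence $u_0 u_B=u_B$. Consequently
\[
(u_0-u_Cu_C^{*})u_B=u_B-\chi_{C\cup\sigma(C)}\,u_B=u_{B\setminus(C\cup\sigma(C))},
\]
whose support is disjoint from that of $u_C$; orthogonality of $V^{**}$ again closes the argument.

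Part $(b)$ is the main obstacle, since when $B\cap\sigma(C)\neq\emptyset$ the supports of $u_B$ and $u_C$ need not be disjoint. The plan is to peel off pieces by $\sigma$-invariant projections, using the standard identity $V(x,yp)=V(xp,yp)$ (valid whenever $p\in B(A)$ is a projection, because $x(1-p)\perp yp$). First, with $p=\chi_{C\cup\sigma(C)}$ and $B\cap C=\emptyset$, I reduce to $V(u_B,u_C)=V(u_{B\cap\sigma(C)},u_C)$. Then, with $q=\chi_{X\cup\sigma(X)}$ where $X:=B\cap\sigma(C)$, and using $u_{\sigma(X)}=-u_X$, I arrive at $V(u_B,u_C)=V(u_X,u_{\sigma(X)})=-V(u_X,u_X)$.

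The remaining step, showing that the self-pairing $V(u_X,u_X)$ vanishes, is the genuine crux and cannot be settled by support-disjointness alone. To dispose of it I would exploit the finite additivity of $E\mapsto V(u_E,u_E)$ along decompositions of $X$ into pairwise $\sigma$-disjoint pieces (the cross-terms vanish by orthogonality of their supports), combined with the boundedness of $V$ and the spectral approximations of Lemma~\ref{l spectral resolution for skew symmetric}, to eliminate any nonzero contribution.
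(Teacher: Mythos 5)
Your treatment of parts $(a)$ and $(c)$ arrives at essentially the same identities as the paper, with one caveat: the assertion that ``the Arens extension $V^{**}$ is orthogonal on $B(A)$'' is not a free hypothesis. Orthogonality of a form does not automatically pass to the bidual, and showing that $V^{**}$ annihilates disjointly supported elements of $B(K)^{\tau}$ is precisely what the paper's argument supplies: it first proves the identities for characteristic functions of compact sets $K_1,K_2$ with $K_1,K_2,\sigma(K_1),\sigma(K_2)$ pairwise disjoint, and then passes to Borel sets by inner regularity together with the separate weak$^*$ continuity of $V^{**}$. Your computation $(u_{0}-u_{C}u_{C}^*)u_{B}=u_{B\setminus(C\cup\sigma(C))}$ for $(c)$ is the same one the paper uses; you then invoke support disjointness directly where the paper invokes $(b)$, which amounts to the same thing.

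The genuine gap is in part $(b)$. Your reduction is correct: cutting by the $\sigma$-invariant projections $\chi_{C\cup\sigma(C)}$ and $\chi_{X\cup\sigma(X)}$ does yield $V(u_{B},u_{C})=-V(u_{X},u_{X})$ with $X=B\cap\sigma(C)$. But the endgame you propose cannot succeed, because $V(u_{X},u_{X})$ need not vanish. Take $K=\{t_1,t_2\}$ with $\sigma$ the flip, so that $A=C(K)^{\tau}\equiv \mathbb{C}_{_{\mathbb{R}}}$ (as in Remark 3.1 of the paper); here two elements are orthogonal only when one of them is zero, so \emph{every} bounded bilinear form on $A$ is orthogonal. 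For $V(x,y)=\Re\hbox{e}(xy^*)$ and $X=\{t_1\}$ one has $u_{X}=i$ and $V(u_{X},u_{X})=\Re\hbox{e}(i\cdot\overline{i})=1$; correspondingly, with $B=\{t_1\}$ and $C=\{t_2\}$ one gets $B\cap C=\emptyset$ but $V(u_{B},u_{C})=-1$. No finite-additivity or boundedness argument can eliminate a quantity that is genuinely nonzero. What the paper actually does for $(b)$ is to run the same inner-regularity argument as in $(a)$, starting from $V(u_{K_1},u_{K_2})=0$ for compacta with $K_1,K_2,\sigma(K_1),\sigma(K_2)$ pairwise disjoint; this establishes $(b)$ exactly in the case $(B\cup\sigma(B))\cap(C\cup\sigma(C))=\emptyset$, which is the case where your support-disjointness reasoning already applies and the only case needed for $(c)$ and for the later proof of Theorem 1.1. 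So the ``main obstacle'' you isolate is real, but it is not surmountable as you propose: by your own (correct) reduction together with the example above, statement $(b)$ only holds, and should only be applied, when the supports of $u_{B}$ and $u_{C}$ are genuinely disjoint.
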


The proof given in \cite{GarPe2014} remains valid with the obvious changes in the notation. We include here an sketch of the changes for completeness reasons.

\begin{proof} By an abuse of notation, we identify $V$ and $V^{**}$. Arguing as in the first part of the proof of \cite[Proposition 2.3]{GarPe2014} we get

\begin{equation}\label{eq orth sym vs anti compact} V\left(u_{_{K_2}},\frac{1}{2}(\chi_{_{K_1}}+\chi_{_{\sigma(K_1)}})\right)=V\left(\frac{1}{2}(\chi_{_{K_1}}+\chi_{_{\sigma(K_1)}}), u_{_{K_2}}\right)=0, \end{equation}
and
\begin{equation}\label{eq orth anti vs anti compact} V\left(u_{_{K_1}},u_{_{K_2}}\right)=0,
 \end{equation} whenever $K_1$ and $K_2$ are two compact subsets of $K$ such that $K_1, K_2, \sigma(K_1)$ and $\sigma(K_2)$ are pairwise disjoint.\smallskip

$a)$ Let now $D,B$ be two disjoint Borel subsets of $K$ such that $\sigma(D)=D$ and $B\cap \sigma(B) =\emptyset .$
By inner regularity there exist nets of the form $(\chi_{_{K_{\lambda}^{^D}}})_{\lambda}$ and $(\chi_{_{K_{\gamma}^{^B}}})_{\gamma}$ such that $(\chi_{_{K_{\lambda}^{^D}}})_{\lambda}$ and  $(\chi_{_{K_{\gamma}^{^B}}})_{\gamma}$ converge in the weak$^*$ topology of $C(K)^{**}$ to $\chi_{_D}$ and $\chi_{_B}$, respectively, where each $K_{\lambda}^{^D} \subseteq D$ and each $K_{\gamma}^{^B}\subseteq B$ is a
compact subset of $K$. By the assumptions made on $D$ and $B$ we have that $K_{\lambda}^{^D}\cap K_{\gamma}^{^B}=K_{\lambda}^{^D}\cap \sigma(K_{\gamma}^{^B})=\emptyset$ and $K_{\gamma}^{^B}\cap \sigma(K_{\gamma}^{^B})=\emptyset$ for all $\lambda$ and $\gamma.$ By (\ref{eq orth sym vs anti compact}) and the separate weak$^*$ continuity of $V$ we have \begin{equation}
 \label{eq orth symm vs anti} V(\chi_{_D},u_{_B})=w^*-\lim_{\lambda} \left(w^*-\lim_{\gamma} V\left(\frac{\chi_{_{K_{\lambda}^{^D}}}+\chi_{_{\sigma(K_{\lambda}^{^D})}}}{2},
  u_{_{K_{\gamma}^{^B}}}\right)\right)=0,
 \end{equation}
 and
\begin{equation}
 \label{eq orth antivs symm} V(u_{_B},\chi_{_{_D}})=0.
 \end{equation}\medskip

A similar argument, but replacing (\ref{eq orth sym vs anti compact}) with (\ref{eq orth anti vs anti compact}), applies to obtain $b)$.\smallskip

To prove the last statement, we observe that
$$(u_{_0}-u_cu_c^*)u_{_B}=(\chi_{_{K\backslash F}} -\chi_{_C}-\chi_{_{\sigma(C)}})u_{_B}=
  \chi_{_{K\setminus (F\cup C\cup \sigma(C))}} u_{_B}=u_{_{(K\setminus (C\cup \sigma(C)))\cap
  B}},$$ and hence the statement $c)$ follows from $b)$.
\end{proof}

The statement of Theorem 2.4 in \cite{GarPe2014} remains unaltered, however, the proof of this theorem needs a slight modification from line 11.

\begin{proof}[Proof of Theorem \ref{t orhtogonal forms real abelian}] 
By Proposition \cite[Proposition 1.5]{GarPe2014} we have \begin{equation}
\label{eq theorem -1} V(a_1,a_2)=V(a_1 a_2,1),
\end{equation} for every $a_1,a_2$ in $B(K)^{\tau}_{sa}$.

To deal with the skew-symmetric part, let $D,B,C$ be Borel subsets of $K$ with, $D=\sigma(D)$, $B\cap \sigma(B)=\emptyset$ and $C\cap \sigma(C)=\emptyset$. From Proposition \ref{p extn Boral algebra orthogonal} $a)$, we have \begin{equation}\label{eq sym against anti}V(\chi_{_D},u_{_B})=V(\chi_{_D},u_{_B}(1-\chi_{_D}+\chi_{_D}))= V(\chi_{_D},u_{_{B\cap (K\backslash D)}})+ V(\chi_{_D},u_{_B}\chi_{_D})\end{equation} $$=V(\chi_{_D}-1+1,u_{_B}\chi_{_D})= V(-\chi_{_{(K\backslash D)}}+1,u_{_{(B\cap D)}}) =V(1,u_{_B}\chi_{_D}),$$
 and similarly,
\begin{equation}\label{eq anti against sym}V(u_{_B},\chi_{_D})=V(u_{_B}\chi_{_D},1).
\end{equation}

If we apply Proposition \ref{p extn Boral algebra orthogonal} $b)$ and $c)$, repeatedly, we deduce that \begin{equation} \label{eq anti anti 1} V(u_{_B},u_{_C})=V(u_{_B}u_{_0} ,u_{_C})=V(u_{_B}(u_{_0} +u_{_C}u^*_{_C} -u_{_C}u^*_{_C}),u_{_C})\end{equation} $$=   V(u_{_B}u_{_C}u^*_{_C},u_{_C})=V(u_{_B}u_{_C}u^*_{_C},u_{_C}-u_{_0}+u_{_0})$$ $$ = V(u_{_{(B\cap C)}},-u_{_{((K\backslash F)\backslash C)}}+u_{_0}) =V(u_{_{(B\cap C)}},u_{_0})=V(u_{_B}u_{_C},u_{_0}).$$ and similarly \begin{equation} \label{eq anti anti 2}V(u_{_B},u_{_C})= V(u_{_0},u_{_B}u_{_C}).\end{equation}

Let $\displaystyle{a_l=\sum_{j=1}^{m_l} \mu_{l,j} \chi_{_{D^l_j}},}$ $\displaystyle{b_l=\sum_{k=1}^{p_l} \lambda_{l,k} u_{_{B^l_k}}}$ ($l\in\{1,2\}$) be two simple elements in $B(K)^{\tau}_{sa}$ and $B(K)^{\tau}_{skew}$, respectively, where $\lambda_{l,k}, \mu_{l,j}\in \mathbb{R},$ for each $l\in \{1,2\},$ $\{D^l_1,\ldots, D^l_{m_l}\}$ and $\{B^l_1,\ldots, B^l_{p_l}\}$ are families of mutually disjoint Borel subsets of $K$ with $\sigma(D^l_j) = D^l_j$ and $B^l_i \cap \sigma(B^l_i) =\emptyset.$ By $(\ref{eq theorem -1})$, $(\ref{eq sym against anti})$, $(\ref{eq anti against sym})$, and $(\ref{eq anti anti 1}),$ we have
$$V(a_1+b_1, a_2+b_2)=V(a_1 a_2,1)+ \sum_{j=1}^{m_1} \sum_{k=1}^{p_2} \mu_{1,j}  \lambda_{2,k} V\left( \chi_{_{D^1_j}},  u_{_{B^2_k}}\right)$$ $$+\sum_{k=1}^{p_1}\sum_{j=1}^{m_2} \mu_{2,j}  \lambda_{1,k} V\left( u_{_{B^1_k}}, \chi_{_{D^2_j}}\right)+\sum_{k=1}^{p_1}\sum_{k=1}^{p_2} \lambda_{2,k} \lambda_{1,k} V\left( u_{_{B^1_k}},  u_{_{B^2_k}}\right)$$ $$= V(a_1 a_2,1)+ \sum_{j=1}^{m_1} \sum_{k=1}^{p_2} \mu_{1,j}  \lambda_{2,k} V\left(1 ,  \chi_{_{D^1_j}} u_{_{B^2_k}}\right)$$ $$+\sum_{k=1}^{p_1}\sum_{j=1}^{m_2} \mu_{2,j}  \lambda_{1,k} V\left( u_{_{B^1_k}} \chi_{_{D^2_j}} , 1\right)+\sum_{k=1}^{p_1}\sum_{k=1}^{p_2} \lambda_{2,k} \lambda_{1,k} V\left( u_{_{B^1_k}}u_{_{B^2_k}} , u_{_0} \right) $$ $$= V(a_1 a_2,1)+  V\left(1 ,  a_1 b_2\right) +V\left( b_1 a_2,1\right) + V\left( b_1 b_2 , u_{_0}\right)$$
$$= \psi_1 (a_1 a_2)+  \psi_2 \left(a_1 b_2\right) +\psi_1 \left( b_1 a_2 \right) + \psi_4 \left( b_1 b_2\right),$$
where $\psi_1,\psi_2,$ and $\psi_4$ are the functionals in $A^*$ defined by $\psi_1(x)= V(x,1),$ $\psi_2(x)= V(1,x),$ and $\psi_4(x) = V(x  ,u_{_0}),$ respectively. Since, by Lemma \ref{l spectral resolution for skew symmetric}, simple elements of the above form are norm-dense in $B(K)^{\tau}_{sa}$ and $B(K)^{\tau}_{skew}$, respectively, and $V$ is continuous, we deduce that $$V(a_1+b_1, a_2+b_2)= \psi_1 (a_1 a_2)+  \psi_2 \left(a_1 b_2\right) +\psi_1 \left( b_1 a_2 \right) + \psi_4 \left( b_1 b_2\right),$$ for every $a_1,a_2\in B(K)^{\tau}_{sa}$, $b_1,b_2 \in B(K)^{\tau}_{skew}.$\smallskip

Now, taking $\phi_1 = \frac14 (2 \psi_1 +\psi_2+\psi_4),$ $\phi_2 = \frac14 (2 \psi_1 -\psi_2-\psi_4),$
$\phi_3 = \frac14 ( \psi_2- \psi_4),$ and $\phi_4 = \frac14 (\psi_4-\psi_2),$ we get $$V(a_1+b_1, a_2+b_2)= \phi_1 ((a_1+b_1) (a_2+b_2))+  \phi_2 \left((a_1+b_1) (a_2+b_2)^*\right) $$ $$+\phi_3 \left( (a_1+b_1)^* (a_2+b_2) \right) + \phi_4 \left( (a_1+b_1)^* (a_2+b_2)^*\right),$$ for every $a_1,a_2\in B(K)^{\tau}_{sa}$, $b_1,b_2 \in B(K)^{\tau}_{skew}.$\smallskip

Finally, defining $\varphi_1 (x) = \phi_1 (x)+ \phi_4(x^*)$ and  $\varphi_2(x)= \phi_2(x)+ \phi_3(x^*)$ $(x\in A)$, we get the desired statement.
\end{proof}

\section{Orthogonality preservers between $C(K)^{\tau}$-spaces}

In this section we shall study orthogonality preserving linear bijections between commutative unital real C$^*$-algebras. The aim is to provide to the reader an argument to avoid the difficulties in the proof of \cite[Theorem 3.5]{GarPe2014}. We have already commented in the introduction that the arguments in the proof of \cite[Theorem 3.5]{GarPe2014} are only valid to show that every orthogonality preserving linear bijection between $C_r(K;F)$-spaces is continuous (compare also page 287 and section 3 in the same paper).\smallskip

We begin this section with a remark that present a commutative unital real C$^*$-algebra which is not C$^*$-isomorphic to a real function algebra of the form $C_r(K, F)$.

\begin{remark}\label{remark C(K)tau not isomorphic to C(K,f)}{\rm Let $K=\{t_1,t_2\}$ equipped with the discrete topology, $\sigma : K\to K$ the topological involution given by $\sigma (t_1) = t_2$. It is easy to check that $C(K)^\tau \equiv \mathbb{C}_{_{\mathbb{R}}}$, the complex field regarded as a real space. Suppose there exists a compact Hausdorff space $X$ and a closed subset $F\subseteq X$ such that $C(K)^\tau \equiv \mathbb{C}_{_{\mathbb{R}}}$ is C$^*$-isomorphic to $C_r(X;F)$. Since $C(X,\mathbb{R})$ is a real subspace of $C_r(X;F)$, we easily deduce from Urysohn's lemma that $\sharp X \leq 2$ and hence $X= \{s_1,s_2\}$. In this case, there are only three possibilities to consider, namely, $F=\emptyset$, $F=\{s_2\}$ and $F= X$. The real C$^*$-algebra $C_r (X,F)$ coincides with $C(K)= \mathbb{C}\oplus^{\infty} \mathbb{C}$, $\mathbb{C}\oplus^{\infty} \mathbb{R}$ and $\mathbb{R}\oplus^{\infty} \mathbb{R}$, respectively. None of the above real C$^*$-algebras is C$^*$-isomorphic to $C(K)^\tau\equiv \mathbb{C}_{_{\mathbb{R}}}$.}\end{remark}

The above Remark \ref{remark C(K)tau not isomorphic to C(K,f)} implies that we cannot derived that every orthogonality preserving linear bijection between commutative unital real C$^*$-algebras is (automatically) continuous as a consequence of \cite[Theorem 3.2, Remark 3.4 and the proof of Theorem 3.5]{GarPe2014}.\smallskip

Henceforth, let $T: C(K_1)^{\tau_1}\to C(K_2)^{\tau_2}$ be an orthogonality preserving real linear bijection\hyphenation{bijection}. Following standard notation, for each $s\in K_2$, we denote by $\delta_s : C(K_2)^{\tau_2} \to \mathbb{C}$ the linear mapping given by $\delta_s (g)= g(s)$ ($g\in C(K_2)^{\tau_2}$). We observe that $T$ being surjective implies that $\delta_s T: C(K_1)^{\tau_1} \to \mathbb{C}$ is a non-zero linear map. The symbol $\hbox{supp} (\delta_s T)$ will denote the set of all $t\in K_1$ such that for each open set $U= \sigma_1 (U)\subseteq K_1$ with  $t\in U $ there exists $f\in C(K_1)^{\tau_1}$ with  $\hbox{coz}(f)\subseteq U$ and $\delta_{s} (T(f)) \neq 0$. Let us recall that the \emph{cozero} set, $coz(f)$, of a function $f\in C(K_1)^{\tau_1}$ is the set $\{t\in K_1 : f(t)\neq 0\}$. The equality $\sigma_1 (coz(f)) =coz (f)$ holds for every $f\in C(K_1)^{\tau_1}$.

\begin{proposition}\label{l tech 1}
\begin{enumerate}[$(a)$]
\item For each $s\in K_2$ there exists a unique element $t_{s}\in K_1$ such that the set $\hbox{supp}(\delta_s T) = \{t_{s},\sigma_1(t_s)\}$;

\item For every $s\in K_2$, we have $\delta_s T$ is continuous if and only if $\delta_{\sigma_2(s)} T$  is continuous. Moreover, the equality $\hbox{supp} (\delta_s T)$ $= \hbox{supp} (\delta_{\sigma_2(s)} T)$ holds for every $s\in K_2$.
\end{enumerate}
\end{proposition}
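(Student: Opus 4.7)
My plan is to prove (b) first as an immediate algebraic observation and then attack (a) by a support-theoretic argument, carried out consistently in the $\tau_1$-symmetric category. For (b), the key is the pointwise identity $\delta_s T(f)=\overline{\delta_{\sigma_2(s)}T(f)}$, which holds for every $f\in C(K_1)^{\tau_1}$ because $T(f)\in C(K_2)^{\tau_2}$ and $\tau_2(g)(s)=\overline{g(\sigma_2(s))}$. Since complex conjugation is an $\mathbb{R}$-linear homeomorphism of $\mathbb{C}$, continuity of $\delta_s T$ is equivalent to continuity of $\delta_{\sigma_2(s)} T$; and since $z=0\Leftrightarrow \bar z=0$, the non-vanishing condition used to define the support is the same for both functionals, so the two supports coincide.

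For (a), note first that $\hbox{supp}(\delta_s T)$ is automatically $\sigma_1$-invariant, since the defining quantifier only ranges over $\sigma_1$-invariant open sets. It remains to show it is nonempty and consists of a single $\sigma_1$-orbit. For nonemptiness, suppose toward contradiction that the support is empty. Then for each $t\in K_1$ there is a $\sigma_1$-invariant open neighborhood $U_t$ of $t$ such that $\delta_s T(f)=0$ for every $f\in C(K_1)^{\tau_1}$ with $\hbox{coz}(f)\subseteq U_t$. Extract a finite subcover $U_{t_1},\ldots,U_{t_n}$ and start from any non-negative real partition of unity $\{\psi_i\}$ subordinate to it; the symmetrized functions $\phi_i=\tfrac12(\psi_i+\psi_i\circ\sigma_1)$ lie in $C(K_1)^{\tau_1}$, sum to $1$, and satisfy $\hbox{coz}(\phi_i)\subseteq U_{t_i}$ (using $\sigma_1$-invariance of $U_{t_i}$ and $\psi_i\geq 0$). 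For any $f\in C(K_1)^{\tau_1}$, the decomposition $f=\sum_i \phi_i f$ puts every summand under the vanishing hypothesis, so $\delta_s T(f)=0$; but $\delta_s T\neq 0$ by surjectivity of $T$, a contradiction.

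To see that the support contains a unique orbit, suppose two distinct $\sigma_1$-orbits $\{t,\sigma_1(t)\}$ and $\{t',\sigma_1(t')\}$ lie in it. By normality, separate them by disjoint open sets $U_1,U_2$ and replace each $U_i$ with $V_i=U_i\cap\sigma_1(U_i)$; this yields disjoint $\sigma_1$-invariant open neighborhoods of the two orbits. By definition of the support, pick $f_i\in C(K_1)^{\tau_1}$ with $\hbox{coz}(f_i)\subseteq V_i$ and $\delta_s T(f_i)\neq 0$. Disjointness of $V_1,V_2$ forces $f_1 f_2^*=f_2^* f_1=0$, i.e.\ $f_1\perp f_2$, whence $T(f_1)\perp T(f_2)$; evaluation at $s$ gives $T(f_1)(s)\,\overline{T(f_2)(s)}=0$, contradicting $\delta_s T(f_1),\delta_s T(f_2)\neq 0$. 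The support is therefore a single $\sigma_1$-orbit, which we write as $\{t_s,\sigma_1(t_s)\}$. No individual step is deep; the only technical subtlety is the need to remain inside the $\tau_1$-symmetric category throughout, which is handled uniformly by the two symmetrizations $(\cdot)\mapsto\tfrac12((\cdot)+(\cdot)\circ\sigma_1)$ for functions and $U\mapsto U\cap\sigma_1(U)$ for open sets.
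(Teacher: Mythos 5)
Your proof is correct and follows essentially the same route as the paper's: the compactness/symmetrized-partition-of-unity argument for nonemptiness of the support, the disjoint $\sigma_1$-invariant neighborhoods plus orthogonality preservation for the single-orbit claim, and the identity $\delta_s T(f)=\overline{\delta_{\sigma_2(s)}T(f)}$ for part $(b)$. The one place you improve on the paper is the equality of supports in $(b)$: you observe that $\delta_s T(f)\neq 0$ if and only if $\delta_{\sigma_2(s)}T(f)\neq 0$, so the supports coincide by definition, whereas the paper runs a second separation-by-disjoint-neighborhoods argument to reach the same conclusion.
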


\begin{proof} $a)$ Let us take $s\in K_2$. We first show that  $\hbox{supp} (\delta_s T)$ contains at most two points of the form $t$ and $\sigma_1 (t)$. Arguing by contradiction, we assume that there exist $t_1, t_2$ in $\hbox{supp} (\delta_s T)$ with $t_1\neq t_2, \sigma_1(t_2)$. In this case, we can find two open disjoint subsets $U_1$ and $U_2$ in $K_1$ with $ t_i \in  U_i =\sigma_i (U_i),$ and two elements $f_1,f_2\in C(K_1)^{\tau_1}$ satisfying $\hbox{coz} (f_i)\subseteq U_i$ and $\delta_s T(f_i)\neq 0,$ for every $i=1,2.$ This is impossible because $f_1\perp f_2$ and $T$ is orthogonality preserving.\smallskip

We shall show next that $\hbox{supp} (\delta_s T)\neq \emptyset$. Otherwise, $\hbox{supp} (\delta_s T)= \emptyset$. Then, for each $t\in K_1$, there exists an open subset $U_t= \sigma_1 (U_t)$ with $t\in U_t$ and $\delta_s (T(f)) = 0$ for every $f\in C(K_1)^{\tau_1}$ with $\hbox{coz} (f)\subseteq U_t$. By a compactness argument, we can find a finite open cover $\{U_1,\ldots,U_m\}$ of $K_1$ satisfying $U_k= \sigma_1 (U_k)$ $(\forall k)$ and $\delta_s (T(f)) = 0$ for every $f\in C(K_1)^{\tau_1}$ with $\hbox{coz} (f)\subseteq U_k$ for some $k$. Let $g_1,\ldots,g_m$ be a continuous decomposition of the identity in $C(K_1)$ subordinate to  $U_1,\ldots,U_m$. Since $U_k= \sigma_1 (U_k)$, the elements $f_1=\frac{g_1+\tau_1 (g_1)}{2},\ldots, f_m=\frac{g_m+\tau_1 (g_m)}{2}$ define a continuous decomposition of the unit in $C(K_1)^{\tau_1}$ subordinate to  $U_1,\ldots,U_m$. For each $f\in C(K_1)^{\tau_1}$ we have $$\delta_s T(f) = \sum_{k=1}^{m} \delta_s T(f f_i) = 0,$$ which contradicts $\delta_s T \neq 0.$

$b)$ Let $s\in K_2.$  Clearly, $\delta_s (g) =  \overline{\delta_{\sigma_2(s)} (g)},$  for every $g\in C(K_2)^{\tau_2}.$ The first statement follows from the identity $\delta_s T =\overline{\delta_{\sigma_2(s)} T}.$ Let us assume that $\hbox{supp} (\delta_s T)\neq \hbox{supp} (\delta_{\sigma_2(s)} T).$ In this case there exist $t_1\in \hbox{supp} (\delta_s T)$ and $t_2\in \hbox{supp} (\delta_{\sigma_2(s)} T)$ with $t_1\neq t_2 ,\sigma_1 (t_2).$ We can find two open disjoint subsets $U_1$ and $U_2$ satisfying $t_i \in U_i = \sigma_1 (U_i)$ and two elements $f_1,f_2\in C(K_1)^{\tau_1}$ with coz$(f_i)\subseteq U_i$, $\delta_{s} T (f_1)\neq 0$ and $\delta_{\sigma_2(s)} T(f_2) = \overline{\delta_{s}T(f_2)} \neq 0,$ which contradicts $T(f_1)\perp T(f_2)$.
\end{proof}

Let us define an equivalence relation on $K_i$ given by $t\sim s$ if $\sigma_i (t) = \sigma_i(s)$. It is known that the quotient space $[K_i]= K_i/\sim$ is compact. It is not hard to check that $[K_i]= K_i/\sim$ is a compact Hausdorff space. The equivalence class of an element $t\in K_i$ is denoted by $[t]=\{t'\in K_i: t'\sim t \}$. Applying Proposition \ref{l tech 1}$(a)$, we can define a map $\varphi: K_2 \to [K_1]$, $s\mapsto [t]=\hbox{supp}(\delta_s T)$. By Proposition \ref{l tech 1}$(b)$, $\hbox{supp}(\delta_{s_1} T) =\hbox{supp}(\delta_{s_2} T)$, for every $s_1$, $s_2$ in $K_2$ with $s_1\sim s_2$. Therefore, the mapping $[\varphi]: [K_2] \to [K_1]$, $[s]\mapsto [t]=\hbox{supp}(\delta_s T)$ is well defined.\smallskip

\begin{lemma}\label{l varphi not in supp(f)} Let $s$ be an element in $K_2,$ then $\delta_s T (f) =0$ for every $f\in C(K_1)^{\tau_1}$ with $\hbox{supp}(\delta_s T)\cap \overline{\hbox{coz}(f)} =\emptyset$. In particular, the set $\displaystyle \mathcal{S}\hbox{upp}(K_2)=\bigcup_{s\in K_2} \hbox{supp}(\delta_s T)$ is dense in $K_1$.
\end{lemma}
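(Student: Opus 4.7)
The plan is to deduce both assertions directly from the defining property of $\hbox{supp}(\delta_s T)$ via a $\sigma_1$-invariant partition-of-unity argument, following the symmetrization trick already used in the proof of Proposition \ref{l tech 1}$(a)$.

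For the first assertion, I would fix $s \in K_2$ and $f \in C(K_1)^{\tau_1}$ with the compact set $C := \overline{\hbox{coz}(f)}$ disjoint from $\hbox{supp}(\delta_s T)$. Since $\hbox{coz}(f)$ is $\sigma_1$-invariant (as recalled in the excerpt), so is $C$. For each $t \in C$, the condition $t \notin \hbox{supp}(\delta_s T)$ provides an open $\sigma_1$-invariant neighborhood $U_t$ of $t$ such that $\delta_s T(h) = 0$ for every $h \in C(K_1)^{\tau_1}$ with $\hbox{coz}(h) \subseteq U_t$. By compactness, extract a finite subcover $U_{t_1}, \ldots, U_{t_n}$ of $C$ and pick a continuous partition of unity $g_1, \ldots, g_n \in C(K_1)$ subordinate to it with $\sum_{k=1}^{n} g_k \equiv 1$ on $C$. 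Replacing each $g_k$ by $f_k := (g_k + \tau_1(g_k))/2$ lands one in $C(K_1)^{\tau_1}$; because every $U_{t_k}$ and $C$ are $\sigma_1$-invariant, one still has $\hbox{coz}(f_k) \subseteq U_{t_k}$ and $\sum_{k=1}^{n} f_k \equiv 1$ on $C$. Hence $f = \sum_{k=1}^{n} f f_k$ (the equality is trivial off $C$, where $f$ vanishes), each $f f_k \in C(K_1)^{\tau_1}$ has $\hbox{coz}(f f_k) \subseteq U_{t_k}$, and therefore $\delta_s T(f) = \sum_{k=1}^{n} \delta_s T(f f_k) = 0$.

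For the density assertion, I argue by contradiction. If $\mathcal{S}\hbox{upp}(K_2)$ were not dense, take a nonempty open $U \subseteq K_1$ disjoint from it. Since $\mathcal{S}\hbox{upp}(K_2) = \bigcup_{s \in K_2} \{t_s, \sigma_1(t_s)\}$ (Proposition \ref{l tech 1}$(a)$) is $\sigma_1$-invariant, so is its complement, so $W := U \cup \sigma_1(U)$ is a nonempty open $\sigma_1$-invariant set still disjoint from $\mathcal{S}\hbox{upp}(K_2)$. Pick $t_0 \in W$ and, using the regularity of the compact Hausdorff space $K_1$, an open $V$ with $t_0 \in V \subseteq \overline{V} \subseteq W$. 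By Urysohn's lemma choose $g \in C(K_1)$ with values in $[0,1]$, $g(t_0) = 1$, and $\hbox{coz}(g) \subseteq V$, and put $f := (g + \tau_1(g))/2 \in C(K_1)^{\tau_1}$. Then $f(t_0) \geq 1/2 > 0$, so $f \neq 0$, while $\overline{\hbox{coz}(f)} \subseteq \overline{V} \cup \sigma_1(\overline{V}) \subseteq W$ is disjoint from every $\hbox{supp}(\delta_s T)$. The first assertion then yields $\delta_s T(f) = 0$ for every $s \in K_2$, i.e.\ $T(f) = 0$, contradicting the injectivity of $T$.

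The only delicate step is the symmetrization: after averaging $g_k$ with $\tau_1(g_k)$, one needs both the cozero containment $\hbox{coz}(f_k) \subseteq U_{t_k}$ and the identity $\sum_k f_k \equiv 1$ on $C$ to survive. Both rely on the $\sigma_1$-invariance of each $U_{t_k}$ and of $C$, which is precisely why it is crucial to extract $\sigma_1$-invariant neighborhoods from the definition of $\hbox{supp}(\delta_s T)$ at the very start.
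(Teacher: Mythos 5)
Your proof is correct, but your argument for the first assertion runs along a genuinely different line from the paper's. The printed proof is a one-step orthogonality argument: it picks a $\sigma_1$-invariant open set $U$ containing $\hbox{supp}(\delta_s T)$ and disjoint from $\overline{\hbox{coz}(f)}$, uses the definition of the support (together with Proposition \ref{l tech 1}$(a)$, which guarantees the support is nonempty) to produce $g$ with $\hbox{coz}(g)\subseteq U$ and $\delta_s T(g)\neq 0$, and then concludes from $f\perp g$ that $T(f)\perp T(g)$, hence $T(f)(s)=0$ because $T(g)(s)\neq 0$. You instead cover the compact set $\overline{\hbox{coz}(f)}$ by $\sigma_1$-invariant neighborhoods witnessing non-membership in $\hbox{supp}(\delta_s T)$ and annihilate $f$ piecewise with a symmetrized partition of unity. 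Your route uses only linearity of $\delta_s T$ and the raw definition of the support --- it needs neither the orthogonality-preserving hypothesis at this stage nor the fact that the support is a nonempty set of the form $\{t_s,\sigma_1(t_s)\}$ --- so it is more elementary and slightly more general, at the cost of length; the paper's version is shorter but leans on Proposition \ref{l tech 1}$(a)$ and on orthogonality preservation. Your checks that the symmetrization preserves both $\hbox{coz}(f_k)\subseteq U_{t_k}$ and $\sum_k f_k\equiv 1$ on $\overline{\hbox{coz}(f)}$ are exactly the points that need care, and they go through because all the sets involved are $\sigma_1$-invariant. For the density claim the two arguments coincide (Urysohn plus injectivity of $T$); if anything, you are a little more careful than the printed proof in arranging that the \emph{closure} of $\hbox{coz}(f)$, and not merely $\hbox{coz}(f)$ itself, avoids $\mathcal{S}\hbox{upp}(K_2)$, which is what the first assertion literally requires in order to be applied.
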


\begin{proof} Suppose we have $f\in C(K_1)^{\tau_1}$ with $\hbox{supp}(\delta_s T)\cap \overline{\hbox{coz}(f)} =\emptyset.$ We can find an open set $U\subseteq K_1$ with $\hbox{supp}(\delta_s T)\subseteq U= \sigma_1 (U)$ and $U\cap \overline{\hbox{coz}(f)} =\emptyset.$ By assumptions, there exists $g\in C(K_1)^{\tau_1}$ with coz$(g)\subset U$ and $\delta_{s} T (g) \neq 0.$ $T$ being orthogonality preserving and $f\perp g$ imply that $T(f)\perp T(g),$ and hence $\delta_s T (f) =0$.\smallskip

For the second statement, suppose we can find $t_0\in K_1\backslash \overline{\left(\displaystyle \bigcup_{s\in K_2} \hbox{supp}(\delta_s T)\right)}$. We observe that $\displaystyle \overline{\bigcup_{s\in K_2} \hbox{supp}(\delta_s T)}$ is $\sigma_1$-symmetric. Then, by Urysohn's lemma, there exists $f_0\in C(K_1)^{\tau_1}$ such that $0\leq f_0\leq 1$, $f_0 (t_0)=1$ and coz$(f_0)\cap \overline{\bigcup_{s\in K_2} \hbox{supp}(\delta_s T)}=\emptyset$. We deduce from the first part of the lemma that $T(f_0) (s)=0$, for every $s\in K_2$, which contradicts the injectivity of $T$.
\end{proof}

We shall derive next some consequences of the previous results.

\begin{proposition}\label{l density and kernels} Let $s$ be an element in $K_2$ such that $\delta_s T : C(K_1)^{\tau_1} \to \mathbb{C}$ is a continuous linear map. Then, for each $t\in [\varphi] [s] = \hbox{supp}(\delta_s T)$, there exist $\lambda_s,\mu_s\in \mathbb{C}$ such that $$\delta_s T (f) = \lambda_s \Re\hbox{e}\delta_{t} (f) + {\mu_s} \Im\hbox{m}\delta_t (f),$$ for every $f\in C(K_1)^{\tau_1}$. Moreover, $\lambda_s$ is unique for every $s$, while $\mu_s$ is unique whenever supp$(\delta_s T)$ contains two points {\rm(}i.e. $\sigma_1(t_s)\neq t_s${\rm)}. It is also clear that $\lambda_s = T(1) (s)$, for every $s$ as above.
\end{proposition}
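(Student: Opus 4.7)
The plan is to combine the continuity hypothesis on $\delta_s T$ with Lemma~\ref{l varphi not in supp(f)} to show that $\delta_s T$ vanishes on the kernel of the evaluation map $\mathrm{ev}_t : C(K_1)^{\tau_1}\to\mathbb{C}$, $f\mapsto f(t)$. The conclusion will then reduce to a coordinate description of a real-linear functional on the (at most two-dimensional real) image of $\mathrm{ev}_t$.

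The core step I would carry out first is the following reduction: if $f\in C(K_1)^{\tau_1}$ satisfies $f(t)=0$ (which, together with $\tau_1 f=f$, forces $f(\sigma_1(t))=\overline{f(t)}=0$ as well), then $\delta_s T(f)=0$. Fix $\varepsilon>0$. Continuity of $f$ supplies a $\sigma_1$-symmetric open neighborhood $U$ of $\hbox{supp}(\delta_s T)=\{t,\sigma_1(t)\}$ with $|f|<\varepsilon$ on $U$. I would then build a cutoff $\rho\in C(K_1)^{\tau_1}$ with $0\le\rho\le 1$, identically $1$ on a smaller $\sigma_1$-symmetric neighborhood of $\{t,\sigma_1(t)\}$, and with $\hbox{coz}(\rho)\subseteq U$, by applying Urysohn's lemma and symmetrizing via $g\mapsto\frac{1}{2}(g+\tau_1(g))$ as in the proof of Proposition~\ref{l tech 1}$(a)$. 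Writing $f=\rho f+(1-\rho)f$, the second summand satisfies $\overline{\hbox{coz}((1-\rho)f)}\cap\hbox{supp}(\delta_s T)=\emptyset$, so Lemma~\ref{l varphi not in supp(f)} annihilates it, while $\|\rho f\|_\infty\le\varepsilon$ together with the continuity of $\delta_s T$ forces $|\delta_s T(\rho f)|\le\|\delta_s T\|\,\varepsilon$. Letting $\varepsilon\to 0^+$ yields $\delta_s T(f)=0$.

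Once $\delta_s T$ factors through $\mathrm{ev}_t$, I would split into two cases. If $\sigma_1(t)\neq t$, a Urysohn-plus-symmetrization argument gives $\mathrm{ev}_t$ surjective onto $\mathbb{C}$ viewed as a two-dimensional real space (prescribe $f(t)=z$ and $f(\sigma_1(t))=\overline{z}$ on disjoint neighborhoods), so the induced $\mathbb{R}$-linear map $L:\mathbb{C}\to\mathbb{C}$ admits a unique expansion $L(z)=\lambda_s\,\Re\hbox{e}\, z+\mu_s\,\Im\hbox{m}\, z$, with unique $\lambda_s,\mu_s\in\mathbb{C}$. If instead $t=\sigma_1(t)\in F_1$, the image of $\mathrm{ev}_t$ is $\mathbb{R}$, so $L(x)=\lambda_s x$ is determined by a unique $\lambda_s$, while $\mu_s$ is arbitrary because $\Im\hbox{m}\,\delta_t(f)=0$ identically on $C(K_1)^{\tau_1}$; this accounts for the stated loss of uniqueness of $\mu_s$. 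In either case, evaluating at $f=1$ gives $\lambda_s=\delta_s T(1)=T(1)(s)$.

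The main obstacle is the first step, the cutoff approximation, as it is the only place where continuity of $\delta_s T$ is used in an essential way; everything afterwards is just linear algebra on a real vector space of dimension one or two. The two minor technical points to watch are the $\tau_1$-symmetrization of the cutoff (so that it stays inside $C(K_1)^{\tau_1}$) and the surjectivity of $\mathrm{ev}_t$ onto $\mathbb{C}_{\mathbb{R}}$ when $t\notin F_1$, both of which are handled by the same averaging trick already used throughout Section~\ref{sec:1}.
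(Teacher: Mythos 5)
Your proposal is correct and follows essentially the same route as the paper: the paper establishes $\ker(\delta_{t_s})\subseteq\ker(\delta_s T)$ by citing the Jarosz density of $J_{_{{supp}(\delta_s T)}}$ in $K_{_{{supp}(\delta_s T)}}$ together with Lemma~\ref{l varphi not in supp(f)} and continuity, whereas you simply inline that density argument as an explicit cutoff approximation $f=\rho f+(1-\rho)f$. The concluding linear algebra (expansion of the induced real-linear map in the basis $\Re\hbox{e}\,\delta_{t}$, $\Im\hbox{m}\,\delta_{t}$, with the uniqueness discussion split according to whether $\sigma_1(t)=t$) matches the paper's argument.
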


\begin{proof} We already know that $\hbox{supp}(\delta_s T)=\{t_s,\sigma_1(t_s)\}$, for a unique $t_s\in K_1$. We fix this $t_s$.\smallskip

Let us consider the sets $J_{_{{supp}(\delta_s T)}} :=\{ f\in C(K_1)^{\tau_1} :  \hbox{supp}(\delta_s T) \cap \overline{\hbox{coz}(f)} =\emptyset\}$ and $K_{_{{supp}(\delta_s T)}} :=\{ f\in C(K_1)^{\tau_1} :  f|_{_{\hbox{supp}(\delta_s T)}} = 0\}$. Clearly, $J_{_{{supp}(\delta_s T)}}\subseteq K_{_{{supp}(\delta_s T)}}$. The arguments given by K. Jarosz in \cite[141]{Jar} remain valid to show that $J_{_{{supp}(\delta_s T)}}$ is norm-dense in $K_{_{{supp}(\delta_s T)}}$. Lemma \ref{l varphi not in supp(f)} implies that $J_{_{{supp}(\delta_s T)}} \subseteq \ker (\delta_s T)= \ker (\delta_{_{\sigma_2(s)}} T)$. We deduce from the continuity of $\delta_s T$ that \begin{equation}\label{eq kernels} \ker (\delta_{t_s})=\ker (\delta_{_{\sigma_1(t_s)}})= K_{_{{supp}(\delta_s T)}} \subseteq \ker (\delta_s T)= \ker (\delta_{_{\sigma_2(s)}} T).
\end{equation}

The real linear functionals $\Re\hbox{e} \delta_s T$, $\Im\hbox{m} \delta_s T$, $\Re\hbox{e} \delta_{t_s}$, and  $\Im\hbox{m} \delta_{t_s}$ are all continuous. Since $\ker(\Re\hbox{e} \delta_{t_s}) \cap \ker(\Im\hbox{m} \delta_{t_s}) = \ker(\delta_{t_s})$ and $\ker (\delta_s T) = \ker (\Re\hbox{e}\delta_s T)\cap \ker (\Im\hbox{m}\delta_s T)$, we deduce from \eqref{eq kernels} the existence of $\alpha_1,\alpha_2,\beta_1,\beta_2\in\mathbb{R}$ satisfying $$\Re\hbox{e}\delta_s T =\alpha_1 \Re\hbox{e} \delta_{t_s}+\alpha_2 \Im\hbox{m} \delta_{t_s}$$ and $$\Im\hbox{m}\delta_s T =\beta_1 \Re\hbox{e} \delta_{t_s} +\beta_2 \Im\hbox{m} \delta_{t_s}.$$ Taking $\lambda_s=\alpha_1+i \beta_1$ and $\mu_s=\alpha_2+i \beta_2$ we obtain $$\delta_s T = \lambda_s \Re\hbox{e}\delta_{t_s} + {\mu_s} \Im\hbox{m}\delta_{t_s}.$$

To prove the second statement suppose there are $\lambda_1,\lambda_2$, $\mu_1$ and $\mu_2$ in $\mathbb{C}$ such that $$\lambda_2 \Re\hbox{e}\delta_{t_s} + {\mu_2} \Im\hbox{m}\delta_{t_s}= \delta_s T = \lambda_1 \Re\hbox{e}\delta_{t_s} + {\mu_1} \Im\hbox{m}\delta_{t_s}.$$ Pick, via Urysohn's lemma, a function $f_0\in C(K_1)^{\tau_1}$ satisfying $f_0 (t_s)=1$. Then $\lambda_1 =\lambda_2$. When $\sigma(t_s)\neq t_s$, we can find another function $f_1\in C(K_1)^{\tau_1}$ satisfying $f_1 (t_s)=i$, and hence $\mu_1=\mu_2$.
\end{proof}

\begin{corollary}\label{c varphi injective} Under the above conditions, the following statements hold: \begin{enumerate}[$(a)$] \item Let $s$ be an element in $K_2$ such that $\delta_s T : C(K_1)^{\tau_1} \to \mathbb{C}$ is a continuous linear map and $\sigma_2 (s) \neq s$. Then $f(t_s)=0$, for every $t_s\in \hbox{supp}(\delta_{s} T)$;
\item Suppose $s_1,s_2$ are elements in $K_2$ such that $\sigma_2(s_j)\neq s_j$ for every $j=1,2$, $\delta_{s_1} T$ and $\delta_{s_2} T$ are continuous. If  $\hbox{supp}(\delta_{s_1} T) = \hbox{supp}(\delta_{s_2} T)$ then $s_1=s_2$ or $s_1=\sigma_2(s_2)$.
\end{enumerate}
\end{corollary}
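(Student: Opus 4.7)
The plan is to exploit the factorization $\delta_s T = \lambda_s \Re\hbox{e}\delta_{t_s} + \mu_s \Im\hbox{m}\delta_{t_s}$ from Proposition \ref{l density and kernels}, the conjugation identities $\delta_{\sigma_2(s)} T = \overline{\delta_s T}$ on $K_2$ and $f(\sigma_1(t)) = \overline{f(t)}$ on $K_1$, together with the surjectivity of $T$ fed into Urysohn's lemma. I read the conclusion of (a) as stating that every $f$ annihilated by $\delta_s T$ vanishes on the (two-point) support, i.e.\ $\ker(\delta_s T) \subseteq \ker(\delta_{t_s})$; combined with \eqref{eq kernels} this produces the equality $\ker(\delta_s T) = \ker(\delta_{t_s})$, which is what allows (b) to go through.

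For part (a), I would first show that $\delta_s T : C(K_1)^{\tau_1} \to \mathbb{C}$ is surjective. Since $\sigma_2(s) \neq s$, Urysohn on $K_2$ gives $g_1, g_2 \in C(K_2)^{\tau_2}$ with $g_1(s) = 1$ and $g_2(s) = i$; the surjectivity of $T$ provides pre-images $f_j$ with $\delta_s T(f_1) = 1$ and $\delta_s T(f_2) = i$, so the real-linear image of $\delta_s T$ covers $\mathbb{C}$. On the other hand, the representation of Proposition \ref{l density and kernels} shows that this image is the real span of $\{\lambda_s, \mu_s\}$ in $\mathbb{C}$. Equating the two forces $\lambda_s$ and $\mu_s$ to be $\mathbb{R}$-linearly independent in $\mathbb{C}$ (this also rules out $\sigma_1(t_s) = t_s$, in which case $\Im\hbox{m}\delta_{t_s} \equiv 0$ and the image would collapse to a real line in $\mathbb{C}$). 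Writing $\lambda_s = \alpha_1 + i\beta_1$ and $\mu_s = \alpha_2 + i\beta_2$, the independence is $\alpha_1\beta_2 - \alpha_2\beta_1 \neq 0$, so the joint vanishing $\Re\hbox{e}\,\delta_s T(f) = \Im\hbox{m}\,\delta_s T(f) = 0$ forces $\Re\hbox{e}\,f(t_s) = \Im\hbox{m}\,f(t_s) = 0$; hence $f(t_s) = 0$, and by $\tau_1$-symmetry also $f(\sigma_1(t_s)) = 0$.

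For part (b), I argue by contradiction. Assume $s_1 \notin \{s_2, \sigma_2(s_2)\}$; combined with $\sigma_2(s_j) \neq s_j$, the four points $s_1, \sigma_2(s_1), s_2, \sigma_2(s_2)$ are pairwise distinct in $K_2$. Urysohn then supplies $g \in C(K_2)^{\tau_2}$ with $g(s_1) = 0$ and $g(s_2) \neq 0$; setting $f := T^{-1}(g)$ gives $\delta_{s_1} T(f) = 0$ and $\delta_{s_2} T(f) \neq 0$. Applying (a) at $s_1$ yields $f|_{\hbox{supp}(\delta_{s_1} T)} = 0$, and since $\hbox{supp}(\delta_{s_2} T) = \hbox{supp}(\delta_{s_1} T)$ we obtain $f(t_{s_2}) = 0$, i.e.\ $f \in \ker(\delta_{t_{s_2}})$. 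By \eqref{eq kernels} this forces $\delta_{s_2} T(f) = 0$, a contradiction. Hence $s_1 \in \{s_2, \sigma_2(s_2)\}$.

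The main obstacle is the surjectivity-to-independence step in part (a): turning the surjectivity of $\delta_s T$ onto $\mathbb{C}$, extracted from Urysohn plus surjectivity of $T$, into the $\mathbb{R}$-linear independence of $(\lambda_s, \mu_s)$ in $\mathbb{C}$ and simultaneously excluding the degenerate case $\sigma_1(t_s) = t_s$. Once this non-degeneracy is in place, the inclusion $\ker(\delta_s T) \subseteq \ker(\delta_{t_s})$ is a $2\times 2$ linear algebra observation, and part (b) falls out by playing (a) against the reverse inclusion \eqref{eq kernels}.
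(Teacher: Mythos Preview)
Your proof is correct and follows essentially the same route as the paper's. For $(a)$, both arguments use the surjectivity of $T$ (together with $\sigma_2(s)\neq s$, which you make explicit via Urysohn) to deduce that $\delta_s T$ maps onto $\mathbb{C}$, and then read off from the representation $\delta_s T=\lambda_s\,\Re\mathrm{e}\,\delta_{t_s}+\mu_s\,\Im\mathrm{m}\,\delta_{t_s}$ that $\{\lambda_s,\mu_s\}$ must be an $\mathbb{R}$-basis of $\mathbb{C}$, whence $\delta_s T(f)=0$ forces $f(t_s)=0$; your added remark that this simultaneously excludes $\sigma_1(t_s)=t_s$ is a nice by-product not stated in the paper. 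For $(b)$, your contrapositive via Urysohn and the inclusion \eqref{eq kernels} is just an unpacking of the paper's one-line observation that $\ker(\delta_{s_1}T)\subseteq\ker(\delta_{s_2}T)$ combined with the fact that $C(K_2)^{\tau_2}$ separates non-$\sim$-equivalent points.
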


\begin{proof}$(a)$ Let us fix $t_s\in \hbox{supp} (\delta_s T)$. By Proposition \ref{l density and kernels} there exist $\lambda_s,\mu_s\in \mathbb{C}$ such that \begin{equation}\label{eq expresion of deltasT} \delta_s T (f) = \lambda_s \Re\hbox{e}\delta_{t_s} (f) + {\mu_s} \Im\hbox{m}\delta_{t_s} (f),
 \end{equation}for every $f\in C(K_1)^{\tau_1}$. The surjectivity of $T$ implies that for each $\omega\in \mathbb{C}$ there are real numbers $\alpha,\beta$ such that $\omega = \lambda_s \alpha + {\mu_s} \beta$. This proves that $\{\lambda_s , {\mu_s} \}$ is a basis of the real space $\mathbb{C}_{\mathbb{R}}$. Therefore, by \eqref{eq expresion of deltasT}, $\delta_s T(f)=0$ implies $\Re\hbox{e}f(t_s)=\Im\hbox{m} f(t_s)=\delta_{t_s} (f) = f(t_s)=0,$ and also $f(\sigma_1 (t_s))=0$.\smallskip

$(b)$ Let us assume that $\hbox{supp}(\delta_{s_1} T) = \hbox{supp}(\delta_{s_2} T)$ for $s_1$ and $s_2$ as in the hypothesis. By $(a)$, $\delta_{s_1} T(f) =0$ implies $f (t_s) =0$, for every $t_s\in \hbox{supp}(\delta_{s_1} T)$, which, from \eqref{eq expresion of deltasT}, entails that $\delta_{s_2} T(f) =0$. Since $C(K_2)^{\tau_2}$ separates points $s_1,s_2$ in $K_2$ with $s_1 \nsim s_2$, the surjectivity of $T$ gives $s_1\sim s_2$.
\end{proof}

Given $s\in K_2$ we denote by $\|\delta_s T\|$ the norm of the linear mapping $\delta_s T: C(K_1)^{\tau_1}\to \mathbb{C}$ if the latter map is continuous, we set $\|\delta_s T\|=\infty$ otherwise. By Proposition \ref{l tech 1}$(b)$, $\|\delta_s T\|= \|\delta_{\sigma(s)} T\|$, for every $s\in K_2$.

\begin{lemma}\label{l norm deltasT local on open neighborhood} Let $s\in K_2$ and $t\in K_1$ such that $[t] = \hbox{supp}(\delta_{s} T)$. Let $U=\sigma_1 (U)\subset K_1$ be an open set satisfying $[t]\subset U$. The following statements hold:
\begin{enumerate}[$(a)$]\item If $\|\delta_s T\|<\infty$, then for each $\varepsilon>0$ there exists $f\in C(K_1)^{\tau_1}$ such that $\|f\|\leq 1$, coz$(f)\subset U$ and $|\delta_s T(f) | > \|\delta_s T\| -\varepsilon$;
\item If $\|\delta_s T\|=\infty$, then for each $R>0$ there exists $f\in C(K_1)^{\tau_1}$ such that $\|f\|\leq 1$, coz$(f)\subset U$ and $|\delta_s T(f) | > R$.
\end{enumerate}
\end{lemma}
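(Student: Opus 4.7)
The plan is to prove both parts by the same localization trick, reducing statements about arbitrary functions to statements about functions supported in $U$, and using Lemma~\ref{l varphi not in supp(f)} to show that the ``outer'' piece contributes nothing to $\delta_s T$.

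First I would build a $\sigma_1$-invariant Urysohn bump $h$. Since $[t]=\{t,\sigma_1(t)\}\subset U$ and $U=\sigma_1(U)$ is open, I can pick a $\sigma_1$-invariant open set $V$ with $[t]\subset V\subset\overline{V}\subset U$ (for instance by intersecting a standard open neighbourhood with its $\sigma_1$-image). Urysohn's lemma applied to the disjoint closed sets $\overline{V}$ and $K_1\setminus U$ produces a real-valued $h_0\in C(K_1)$ with $0\leq h_0\leq 1$, $h_0\equiv 1$ on $\overline{V}$, and $h_0\equiv 0$ on $K_1\setminus U$. Symmetrising via $h:=\tfrac12(h_0+h_0\circ\sigma_1)$ yields $h\in C(K_1)^{\tau_1}$ (it is real and $\sigma_1$-invariant) with $0\leq h\leq 1$, $h\equiv 1$ on $V$ and $\mathrm{coz}(h)\subset U$.

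The key observation is now the following: for any $g\in C(K_1)^{\tau_1}$ the product $gh$ again lies in $C(K_1)^{\tau_1}$ (the subalgebra is closed under products), satisfies $\|gh\|\leq\|g\|$, and has $\mathrm{coz}(gh)\subset\mathrm{coz}(h)\subset U$. Moreover $g(1-h)$ vanishes identically on $V$, so $\overline{\mathrm{coz}(g(1-h))}\subset K_1\setminus V$ and therefore $[t]\cap\overline{\mathrm{coz}(g(1-h))}=\emptyset$. Lemma~\ref{l varphi not in supp(f)} then gives $\delta_s T(g(1-h))=0$, i.e.\ $\delta_s T(gh)=\delta_s T(g)$.

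With this reduction both statements follow immediately. For (a), given $\varepsilon>0$ choose $g\in C(K_1)^{\tau_1}$ with $\|g\|\leq 1$ and $|\delta_s T(g)|>\|\delta_s T\|-\varepsilon$; then $f:=gh$ satisfies $\|f\|\leq 1$, $\mathrm{coz}(f)\subset U$ and $|\delta_s T(f)|=|\delta_s T(g)|>\|\delta_s T\|-\varepsilon$. For (b), since $\delta_s T$ is unbounded on the unit ball, given $R>0$ there is $g\in C(K_1)^{\tau_1}$ with $\|g\|\leq 1$ and $|\delta_s T(g)|>R$; the same $f:=gh$ then does the job. The only mildly technical step is the construction of the $\sigma_1$-invariant bump $h$ (the usual obstacle when working inside $C(K)^{\tau}$ rather than $C(K)$), and it is dispatched by the averaging $h_0\mapsto\tfrac12(h_0+h_0\circ\sigma_1)$ applied to a standard Urysohn function; everything else is bookkeeping on top of Lemma~\ref{l varphi not in supp(f)}.
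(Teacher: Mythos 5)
Your argument is correct and is essentially the paper's own proof: both multiply an almost-norming (resp.\ large) element $g$ by a $\sigma_1$-symmetric Urysohn bump supported in $U$ and equal to $1$ near $[t]$, and then invoke Lemma~\ref{l varphi not in supp(f)} to see that $\delta_s T(g(1-h))=0$, so $\delta_s T(gh)=\delta_s T(g)$. The only difference is that you spell out the averaging $h_0\mapsto \tfrac12(h_0+h_0\circ\sigma_1)$ needed to get the bump inside $C(K_1)^{\tau_1}$, which the paper leaves implicit.
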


\begin{proof}$(a)$ Let $g$ be an element in $C(K_1)^{\tau_1}$ such that $\|g\|\leq 1$ and $|\delta_s T(g) | > \|\delta_s T\| -\varepsilon$. Let $V\subseteq K_1$ be an open set satisfying $[t]\subset V\subseteq \overline{V}\subseteq U$. By Urysohn's lemma there exists $0\leq u\leq 1$ in $C(K_1)^{\tau_1}$ with $u|_{\overline{V}} \equiv 1$ and $u|_{K_1\backslash U}\equiv 0$. We observe that $t\notin \overline{\hbox{coz} (1-u)}$, and thus, Lemma \ref{l varphi not in supp(f)} implies that $\delta_s T( g (1-u)) =0$. We therefore have $$\delta_s T( g ) =\delta_s T( g (1-u) + gu ) =\delta_s T( g u),$$ which gives the desired statement for $f=gu\in C(K_1)^{\tau_1}$ with $\|gu \|\leq 1$ and coz$(gu)\subset U$.\smallskip

The proof of $(b)$ is very similar.
\end{proof}

\begin{lemma}\label{l boundedness one} Under the above assumptions, let $(t_n)$ and $(s_n)$ be sequences in $K_1$ and $K_2$, respectively, such that $[t_n]\neq [t_m]$, for every $n\neq m$ and $[t_n] = \hbox{supp}(\delta_{s_n} T)$ for every $n\in \mathbb{N}$. Then $\sup \{ \|\delta_{s_n} T\| : n\in \mathbb{N} \}<\infty$.
\end{lemma}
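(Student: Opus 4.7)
The plan is to argue by contradiction via a gliding-hump construction combining Lemma \ref{l varphi not in supp(f)} (which kills contributions from bumps far from $[t_n]$) with Lemma \ref{l norm deltasT local on open neighborhood} (which localizes the alleged unboundedness of $\delta_{s_n}T$ into arbitrarily small $\sigma_1$-symmetric neighborhoods of $[t_n]$). Assume $\sup_n\|\delta_{s_n}T\|=\infty$ and, after passing to a subsequence, that $\|\delta_{s_n}T\|>n\cdot 2^n$ for every $n$, allowing the value $\infty$ (in which case Lemma \ref{l norm deltasT local on open neighborhood}$(b)$ is used below in place of part $(a)$). By compactness of $[K_1]$, refine further so that $[t_n]\to[t_\infty]$ in $[K_1]$; since the $[t_n]$ are pairwise distinct, $[t_\infty]$ coincides with at most one $[t_n]$, which we discard, so that $[t_\infty]\neq[t_n]$ for every $n$ in the remaining subsequence.

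The key technical step, and the main obstacle, is producing pairwise disjoint $\sigma_1$-invariant open neighborhoods of the $[t_n]$ in $K_1$, since an arbitrary countable family of distinct points in a compact Hausdorff space need not admit pairwise disjoint neighborhoods simultaneously. Working in the compact Hausdorff (hence normal) space $[K_1]$, the set $\{[t_m]:m\geq 1\}\cup\{[t_\infty]\}$ is compact and every $[t_n]$ is isolated in it. Recursively, at step $n$ the set $\overline{V_1}\cup\cdots\cup\overline{V_{n-1}}\cup\{[t_m]:m>n\}\cup\{[t_\infty]\}$ is closed in $[K_1]$ and does not contain $[t_n]$, so normality provides an open $V_n\ni[t_n]$ with $\overline{V_n}$ disjoint from this set. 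Pulling back through the quotient map $K_1\to[K_1]$ yields pairwise disjoint open $\sigma_1$-invariant subsets $U_n\subset K_1$ with $[t_n]\subset U_n$.

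Lemma \ref{l norm deltasT local on open neighborhood} now yields $f_n\in C(K_1)^{\tau_1}$ with $\|f_n\|\leq 1$, $\hbox{coz}(f_n)\subset U_n$ and $|\delta_{s_n}T(f_n)|>n\cdot 2^n$. Set $g=\sum_{n=1}^{\infty}2^{-n}f_n\in C(K_1)^{\tau_1}$, a norm-convergent series, and for each fixed $n$ write $h_n=g-2^{-n}f_n=\sum_{m\neq n}2^{-m}f_m$. Since the $U_m$ are pairwise disjoint, $\hbox{coz}(h_n)\subset\bigcup_{m\neq n}U_m\subset K_1\setminus U_n$, and the latter is closed, so $\overline{\hbox{coz}(h_n)}\cap\hbox{supp}(\delta_{s_n}T)\subset(K_1\setminus U_n)\cap[t_n]=\emptyset$. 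Lemma \ref{l varphi not in supp(f)} then forces $\delta_{s_n}T(h_n)=0$, and hence $|(Tg)(s_n)|=|\delta_{s_n}T(g)|=2^{-n}|\delta_{s_n}T(f_n)|>n$ for every $n$. This contradicts the boundedness of the continuous function $Tg\in C(K_2)^{\tau_2}$ on the compact set $K_2$, and completes the proof.
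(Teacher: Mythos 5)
Your overall strategy is exactly the paper's: assume unboundedness, localize each $\delta_{s_n}T$ into pairwise disjoint $\sigma_1$-invariant neighborhoods of the $[t_n]$ via Lemma \ref{l norm deltasT local on open neighborhood}, superpose the resulting bump functions into a single $g\in C(K_1)^{\tau_1}$, use Lemma \ref{l varphi not in supp(f)} to kill all but one term of $\delta_{s_n}T(g)$, and contradict the boundedness of the continuous function $Tg$. All of that part of your argument is correct, and your handling of the tail $h_n$ (as a single element of $C(K_1)^{\tau_1}$ whose closed cozero set misses $\hbox{supp}(\delta_{s_n}T)$, rather than an illegitimate term-by-term application of the possibly unbounded functional) is exactly the right way to make the "by orthogonality" step rigorous.

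There is, however, a genuine flaw in the one step where you go beyond what the paper asserts, namely your justification of the existence of the pairwise disjoint neighborhoods. You begin by saying "by compactness of $[K_1]$, refine further so that $[t_n]\to[t_\infty]$ in $[K_1]$." Compact Hausdorff spaces are not sequentially compact in general (e.g.\ $\beta\mathbb{N}$, or $\{0,1\}^{[0,1]}$, contain sequences with no convergent subsequence), so this extraction is not available, and your subsequent recursive normality argument — which genuinely needs convergence to conclude that $\{[t_m]:m>n\}\cup\{[t_\infty]\}$ is closed and that each $[t_n]$ is isolated in it — does not get off the ground. The claim you are after is nevertheless true and can be repaired without sequential compactness: since $[K_1]$ is compact, the infinite set $\{[t_n]\}$ has an $\omega$-accumulation point $p$ (every neighborhood of $p$ contains infinitely many $[t_n]$). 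Discard the at most one index with $[t_n]=p$, and recursively choose $[t_{n_k}]$ together with open sets $V_k\ni[t_{n_k}]$ and $W_k\ni p$ such that $V_k\cup W_k\subseteq W_{k-1}$, $V_k\cap W_k=\emptyset$, and $[t_{n_{k+1}}]\in W_k$; the nesting forces the $V_k$ to be pairwise disjoint, and pulling back through the quotient map $K_1\to[K_1]$ gives the required $\sigma_1$-invariant sets $U_k$. (The paper itself passes over this point with the phrase "up to an appropriate subsequence," so your instinct to supply an argument was sound; only the argument chosen fails.) With that step repaired, your proof coincides with the paper's.
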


\begin{proof} Arguing by contradiction, we assume that, $\sup \{ \|\delta_{s_n} T\| : n\in \mathbb{N} \}=\infty.$ Up to an appropriate subsequence, we can find a sequence $(U_n)$ of mutually disjoint open subsets in $K_1$ such that $[t_n]\subset U_n =\sigma_1 (U_n)$ and $\sup \{ \|\delta_{s_n} T\| : n\in \mathbb{N} \}=\infty$.\smallskip

Since  $\sup \{ \|\delta_{s_n} T\| : n\in \mathbb{N} \}=\infty$, we apply Lemma \ref{l norm deltasT local on open neighborhood} to find a subsequence $(n_k)$ and sequence $(f_{k})_k\subset C(K_1)^{\tau_1}$ such that $\|f_{k}\|\leq 1$, coz$(f_{k})\subset U_{n_k}$ and $|\delta_{s_{n_k}} T (f_{k})| > 2^k$, for every natural $k$. The functions in the sequence $(f_{k})_k$ are mutually orthogonal, therefore $\displaystyle f_0 =\sum_{k=1}^{\infty} \frac{1}{k} f_{k}$ defines an element in $C(K_1)^{\tau_1}$. By orthogonality, for each $k_0\in \mathbb{N},$ we have $$|\delta_{s_{n_{k_0}}} T (f_{0})| = \Big|\delta_{s_{n_{k_0}}} T \Big(\frac{1}{k_{0}} f_{k_{0}} + \sum_{k=1, k\neq k_0}^{\infty} \frac{1}{k} f_{k}\Big)\Big| = \Big|\delta_{s_{n_{k_0}}} T \Big(\frac{1}{k_{0}} f_{k_{0}} \Big)\Big|  > \frac{2^{k_0}}{k_0},$$ which is impossible.
\end{proof}

Following a similar notation to that employed in \cite{Jar}, we set $Z_1 :=\{ s\in K_2 : \delta_{s} T \hbox{ is bounded }\}$ and $Z_2 :=\{ s\in K_2 : \delta_{s} T \hbox{ is unbounded }\}.$ Clearly $\sigma_2(Z_i) =Z_i$, for every $i=1,2$. The following conclusions can be straightforwardly derived from the previous results.

\begin{corollary}\label{c boundedness of varphi on points of discontinuity} \begin{enumerate}[$(a)$] \item The set $\displaystyle\mathcal{S}\hbox{upp}(Z_2):=\bigcup_{s\in Z_2} \hbox{supp}(\delta_{s} T) \subseteq K_1$ is finite and satisfies $\sigma_1(\mathcal{S}\hbox{upp}(Z_2)) =\mathcal{S}\hbox{upp}(Z_2)$. Furthermore, every point in $\mathcal{S}\hbox{upp}(Z_2)$ is non-isolated in $K_1$;
\item The set $\{ \|\delta_s T\| : s\in Z_1, \ \sigma_2 (s)\neq s \}$ is bounded.
\end{enumerate}
\end{corollary}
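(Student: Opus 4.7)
The plan is to derive both statements by combining Proposition \ref{l tech 1}, Lemma \ref{l varphi not in supp(f)}, Corollary \ref{c varphi injective}$(b)$ and Lemma \ref{l boundedness one}.

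For part $(a)$, the $\sigma_1$-symmetry of $\mathcal{S}\hbox{upp}(Z_2)$ is immediate: by Proposition \ref{l tech 1}$(a)$ each $\hbox{supp}(\delta_s T) = \{t_s,\sigma_1(t_s)\}$ is already $\sigma_1$-symmetric, so a union of such sets is too. For finiteness I argue by contradiction. If infinitely many distinct equivalence classes $[t]$ appeared in $\mathcal{S}\hbox{upp}(Z_2)$, I would pick a sequence $(s_n)\subset Z_2$ with $\hbox{supp}(\delta_{s_n} T)=[t_n]$ and the classes $[t_n]$ pairwise distinct; Lemma \ref{l boundedness one} would then force $\sup_n\|\delta_{s_n} T\|<\infty$, contradicting $s_n\in Z_2$. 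For the non-isolated claim, assume some $t\in \mathcal{S}\hbox{upp}(Z_2)$ is isolated in $K_1$. Since $\sigma_1$ is a homeomorphism, $\sigma_1(t)$ is isolated too, so $F_t:=\{t,\sigma_1(t)\}$ is clopen and $\sigma_1$-symmetric. Every $f\in C(K_1)^{\tau_1}$ decomposes as $f=f_1+f_2$ with $f_1:=f\,\chi_{F_t}$ and $f_2:=f-f_1$, both still in $C(K_1)^{\tau_1}$. Pick $s\in Z_2$ with $\hbox{supp}(\delta_s T)=F_t$; since $\overline{\hbox{coz}(f_2)}\cap F_t=\emptyset$, Lemma \ref{l varphi not in supp(f)} gives $\delta_s T(f_2)=0$, so $\delta_s T(f)=\delta_s T(f_1)$. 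But $f\mapsto f_1$ is a bounded projection onto the finite-dimensional subspace $\chi_{F_t}C(K_1)^{\tau_1}$, on which every linear functional is automatically continuous; hence $\delta_s T$ is continuous, contradicting $s\in Z_2$.

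For part $(b)$, again by contradiction, suppose there is a sequence $(s_n)$ of pairwise distinct elements in $Z_1$ with $\sigma_2(s_n)\neq s_n$ and $\|\delta_{s_n} T\|\to \infty$. Inspect the supports $[t_{s_n}]=\hbox{supp}(\delta_{s_n} T)$. If some subsequence of these equivalence classes is pairwise distinct, Lemma \ref{l boundedness one} applied to that subsequence yields $\sup\|\delta_{s_n} T\|<\infty$, contradicting $\|\delta_{s_n} T\|\to\infty$. Otherwise, by the pigeonhole principle, some single class $[t]$ is attained by infinitely many $s_n$; but Corollary \ref{c varphi injective}$(b)$ (which is available precisely because $\sigma_2(s_n)\neq s_n$ and $\delta_{s_n}T$ is continuous) forces any two such indices to satisfy $s_n=s_m$ or $s_n=\sigma_2(s_m)$, so at most two distinct $s_n$ can share the same support, contradicting the infinite number of distinct $s_n$.

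The delicate step I expect is the non-isolated assertion in $(a)$: one must carefully verify that the clopen $\sigma_1$-symmetric decomposition $K_1=F_t\cup (K_1\setminus F_t)$ really produces $C(K_1)^{\tau_1}$-valued summands and that the resulting restriction of $\delta_s T$ lives on a finite-dimensional subspace. Once that is in place, part $(a)$'s remaining pieces and part $(b)$ are direct assemblies of the preceding lemmas.
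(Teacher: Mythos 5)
Your proof is correct and takes essentially the same route as the paper: finiteness of $\mathcal{S}\hbox{upp}(Z_2)$ and part $(b)$ are obtained from Lemma \ref{l boundedness one} together with Corollary \ref{c varphi injective}$(b)$, and the non-isolated claim is obtained by splitting off the value at the isolated point, killing the complementary summand with Lemma \ref{l varphi not in supp(f)}, and using automatic continuity on a finite-dimensional range. The only cosmetic difference is that you use the projection $f\mapsto f\,\chi_{F_t}$ onto the clopen set $F_t=\{t,\sigma_1(t)\}$ where the paper subtracts $f(t_0)g_0$ with $g_0=\chi_{\{t_0,\sigma_1(t_0)\}}$, which is the same device.
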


\begin{proof} $(a)$ The first statement follows from Lemma \ref{l boundedness one}. Suppose $t_0\in \mathcal{S}\hbox{upp}(Z_2)$ is isolated in $K_1$. We know that $t_0\in \hbox{supp} (\delta_s T)$ for certain $s\in Z_2$. In this case, we do not need the continuity assumptions in Proposition \ref{l density and kernels} to get a similar conclusion. Indeed, since $\{t_0\}$ is a closed subset, a function $f\in C(K_1)^{\tau_1}$ vanishes at $t_0$ if and only if $t_0\notin \overline{\hbox{coz} (f)}$. Moreover, $g_0=\delta_{t_0} + \delta_{\sigma_1(t_0)}\in C(K_1)^{\tau_1}$ and $f-f(t_0) g_0$ vanished at $t_0$, for every $f\in C(K_1)^{\tau_1}$. Lemma \ref{l varphi not in supp(f)} implies that $\delta_s T(f -f(t_0) g_0)=0$, and hence $\delta_s T(f) =\delta_s T(f(t_0) g_0)$ for every $f\in C(K_1)^{\tau_1}$. We note that the restriction of the real linear mapping $\delta_s T$ to the two dimensional subspace $\mathbb{C}_{\mathbb{R}} g_0$ is clearly continuous, and the same property holds for the linear mapping $\delta_{t_0} g_0 : C(K_1)^{\tau_1} \to \mathbb{C} g_0$, $f\mapsto \delta_{t_0} (f) g_0$. Therefore, the composed mapping $f\mapsto  \delta_s T(f(t_0) g_0) =\delta_s T (f)$ is linear and continuous, which contradicts that $s\in Z_2$.\smallskip

$(b)$ Arguing by contradiction, we assume that $\{ \|\delta_s T\| : s\in Z_1, \ \sigma_2 (s)\neq s \}$ is unbounded. We can find a sequence $(s_n)\subset Z_1$ such that $2^n \leq \|\delta_{s_n} T\|  \lneqq \|\delta_{s_{n+1}} T\|$. Clearly, $s_n\neq s_m$ and $s_n\neq \sigma_2(s_m)$, for every $n\neq m$ (we recall that $\|\delta_s T\|=\|\delta_{\sigma_2(s)} T\|$). Applying Corollary \ref{c varphi injective}$(b)$ we deduce that supp$(\delta_{s_n} T) \cap \hbox{supp} (\delta_{s_m} T)=\emptyset$. Finally, Lemma \ref{l boundedness one} gives the desired contradiction.\smallskip
\end{proof}

Let $\Delta_1 :=\{ (s,t) : s\in Z_1, \ t\in \hbox{supp}(\delta_s T) \}$. We define a mapping $\vartheta : \Delta_1 \to \mathbb{C}$ given by $\vartheta (s,t) =0$ if $\sigma_1(t)=t$ and $\vartheta (s,t) =\mu_s$ if $\sigma_1 (t)\neq t$, where $\mu_s$ is the unique element given by Proposition \ref{l density and kernels}.

\begin{proposition}\label{c boundedness of varphi on points of discontinuity 2} \begin{enumerate}[$(a)$]\item Let $(s,t)\in \Delta_1$ with $\sigma_1 (t)\neq t$, and let $g$ be any element in $C(K_1){\tau_1}$ satisfying $g|_{U} \equiv i$ for some open neighborhood $U$ of $t$.  Then $T(g) (s) =\mu_s =\vartheta (s,t)$.
\item The function $\vartheta$ is bounded;
\item The set $\{ \|\delta_s T\| : s\in Z_1\}$ is bounded;
\item $Z_1$ is closed.
\end{enumerate}
\end{proposition}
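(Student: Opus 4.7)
Part $(a)$ is immediate from Proposition \ref{l density and kernels}: since $g(t)=i$, one has $\Re\hbox{e}\,\delta_t(g)=0$ and $\Im\hbox{m}\,\delta_t(g)=1$, so $T(g)(s)=\delta_s T(g)=\lambda_s\cdot 0+\mu_s\cdot 1=\mu_s=\vartheta(s,t)$.

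For $(b)$ I would argue by contradiction: assume there is a sequence $((s_n,t_n))\subset \Delta_1$ with $\sigma_1(t_n)\neq t_n$ for every $n$ and $|\mu_{s_n}|\to\infty$. A standard Urysohn and $\tau_1$-symmetrization construction (pick $W\ni t$ open with $W\cap\sigma_1(W)=\emptyset$, choose $0\leq u\leq 1$ in $C(K_1)$ equal to $1$ near $t$ and $0$ off $W$, and set $g=i(u-u\circ\sigma_1)$) provides, for each $t\in K_1$ with $\sigma_1(t)\neq t$, a function $g\in C(K_1)^{\tau_1}$ with $\|g\|\leq 1$, $g\equiv i$ on a prescribed small neighborhood of $t$, and support in any preassigned $\sigma_1$-symmetric open neighborhood of $\{t,\sigma_1(t)\}$. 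Now I split into two cases. If $\{[t_n]\}_n$ is infinite, pass to a subsequence with pairwise distinct classes, choose $g_n$ of the above form with supports in pairwise disjoint $\sigma_1$-symmetric neighborhoods of $\{t_n,\sigma_1(t_n)\}$, and observe that by $(a)$, $|\mu_{s_n}|=|T(g_n)(s_n)|\leq\|\delta_{s_n}T\|$, which contradicts $\sup_n\|\delta_{s_n}T\|<\infty$ given by Lemma \ref{l boundedness one}. If $\{[t_n]\}_n$ is finite, pass to a subsequence with $[t_n]=[t_0]$ for every $n$ and fix a single $g\in C(K_1)^{\tau_1}$ of norm $\leq 1$ with $g\equiv i$ near $t_0$; by $(a)$, $\mu_{s_n}=T(g)(s_n)$, which is bounded by $\|T(g)\|_\infty$ since $T(g)\in C(K_2)^{\tau_2}$, again a contradiction.

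Parts $(c)$ and $(d)$ then follow quickly. For $(c)$, Proposition \ref{l density and kernels} yields $\|\delta_s T\|\leq |\lambda_s|+|\mu_s|$ when $\sigma_1(t_s)\neq t_s$, and $\|\delta_s T\|\leq |\lambda_s|$ when $\sigma_1(t_s)=t_s$ (as then $f(t_s)\in\mathbb{R}$ for every $f\in C(K_1)^{\tau_1}$); combining $|\lambda_s|=|T(1)(s)|\leq\|T(1)\|$ with the bound on $|\mu_s|$ furnished by $(b)$, one obtains a uniform $M$ with $\|\delta_s T\|\leq M$ for every $s\in Z_1$. For $(d)$, if a net $(s_\alpha)\subset Z_1$ converges to $s\in K_2$, the continuity of each $T(f)$ gives $\delta_s T(f)=\lim_\alpha \delta_{s_\alpha}T(f)$, so $|\delta_s T(f)|\leq M\|f\|$; thus $\delta_s T$ is bounded and $s\in Z_1$.

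The main obstacle will be the case analysis in $(b)$: Lemma \ref{l boundedness one} applies only when the classes $[t_n]$ are pairwise distinct, so the ``concentrated'' case in which all $t_n$ share a single class $[t_0]$ has to be dispatched by a different device, namely by exploiting the continuity of one fixed function $T(g)$ on the compact space $K_2$ together with identity $(a)$.
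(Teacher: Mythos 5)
Your proof is correct and follows essentially the same route as the paper: part $(a)$ by direct evaluation in the formula of Proposition \ref{l density and kernels}, part $(b)$ by the same dichotomy (pairwise distinct classes handled via Lemma \ref{l boundedness one}, a single repeated class handled by the boundedness of one fixed continuous function $T(g)$ with $g\equiv i$ near $t_0$), and parts $(c)$ and $(d)$ by the same uniform-bound and net-convergence arguments. Your phrasing of the case split in $(b)$ (infinitely many versus finitely many classes $[t_n]$) is in fact a slightly cleaner formulation of the dichotomy the paper states.
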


\begin{proof} $(a)$ Follows straightforwardly from Proposition \ref{l density and kernels}.\smallskip

$(b)$ If $\vartheta$ is unbounded, we can find a sequence $(s_n,t_n)\in \Delta_1$ satisfying $2^n < |\vartheta (s_n,t_n) |$ $<|\vartheta (s_{n+1},t_{n+1}) |.$ Obviously, $s_n\neq s_m$ and $s_n\neq \sigma_2(s_m)$, for every $n\neq m$ and $\hbox{supp} (\delta_{s_n} T) =\{t_n,\sigma_1(t_n)\}$ with $t_n\neq \sigma_1 (t_n)$. We claim that we can find a subsequence $(s_{n_k},t_{n_k})\subset \Delta_1$ satisfying
supp$(\delta_{s_{n_k}} T)\cap \hbox{supp} (\delta_{s_{n_m}} T)=\emptyset$ for every $k\neq m$. Let us assume, on the contrary, that there exists a natural $n_0$ such that supp$(\delta_{s_n} T)= \{t_0,\sigma_1 (t_0)\},$ for every $n\geq n_0$. Let $g$ be any element in $C(K_1){\tau_1}$ satisfying $g|_{U} \equiv i$ for some open neighborhood $U$ of $t_0$. It follows from $(a)$ that $2^n < |\vartheta (s_n,t_n)| =|T(g) (s_n)|,$ for every $n\geq n_0$, which contradicts that $T(g)\in C(K_2)^{\tau_2}.$ Having in mind that $\| \delta_{s_{n_k}} T\| \geq |\vartheta (s_{n_k},t_{n_k}) |$, the desired contradiction follows from Lemma \ref{l boundedness one}.\smallskip

$(c)$ We set $M_1=\sup \{|\vartheta (s,t_s)| : (s,t_s)\in \Delta_1\}$. Let $s$ be an element in $Z_1$, and let $t_s$ be an element in supp$(\delta_s T)$. By Proposition \ref{l density and kernels} the identity $$\delta_s T (f) = T(1) (s) \Re\hbox{e}\delta_{t_s} (f) + \vartheta (s,t_s) \Im\hbox{m}\delta_{t_s} (f),$$ holds for every $f\in C(K_1)^{\tau_1}$. Therefore, $|\delta_s T (f)| \leq \|T(1) \| + M_1$ for every $f\in C(K_1)^{\tau_1}$ with $\|f\|\leq 1$.\smallskip

$(d)$ Let $M=\sup \{\|\delta_s T\| : s\in Z_1 \}$. Fix an arbitrary $z_0\in \overline{Z_1}$ and a function $f\in C(K_1)^{\tau_1}$ with $\|f\|\leq 1$. We can find a net $(z_\mu)\subset Z_1$ converging to $z_0$ in the topology of $K_2$. Since $T(f)$ is a continuous function, we have $|T(f) (z_\mu)|\to |T(f) (z_0)|$. On the other hand, by $(d)$ we have $|T(f) (z_\mu)| = \leq M$, for every $\mu$. Therefore, $|\delta_{z_0} T(f) |=|T(f) (z_0)|\leq M$, for every $f\in C(K_1)^{\tau_1}$ with $\|f\|\leq 1$, which proves that $\delta_{z_0} T$ is bounded, and hence $z_0\in Z_1$.
\end{proof}

We state next the main result of this note, which provides an antidote to fill the gap we commented at the introduction.

\begin{theorem}\label{t automatic cont bijective OP} Every orthogonality preserving linear bijection between commutative unital real C$^*$-algebras is continuous.
\end{theorem}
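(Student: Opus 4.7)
My plan is to show that $Z_2=\emptyset$; once this is known, Proposition \ref{c boundedness of varphi on points of discontinuity 2}$(c)$ provides a constant $M>0$ with $|T(f)(s)|=|\delta_s T(f)|\leq M\|f\|$ for every $s\in Z_1=K_2$ and every $f\in C(K_1)^{\tau_1}$, which immediately yields $\|T(f)\|\leq M\|f\|$ and hence the continuity of $T$.

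Assume, toward a contradiction, that $Z_2\neq\emptyset$. By Proposition \ref{c boundedness of varphi on points of discontinuity 2}$(d)$, $Z_2$ is open in $K_2$, and Corollary \ref{c boundedness of varphi on points of discontinuity}$(a)$ asserts that $\mathcal{S}\hbox{upp}(Z_2)=\{u_1,\sigma_1(u_1),\ldots,u_N,\sigma_1(u_N)\}$ is finite and every $u_i$ is non-isolated in $K_1$. For each $i$ define
$$E_i:=\{s\in K_2:\hbox{supp}(\delta_s T)\subseteq\{u_i,\sigma_1(u_i)\}\}.$$
A short argument from Lemma \ref{l varphi not in supp(f)} combined with the continuity of $s\mapsto T(f)(s)$ shows each $E_i$ is closed in $K_2$ (limits of $s_\lambda\in E_i$ satisfy $T(f)(s)=0$ for every $f$ with $\overline{\hbox{coz}(f)}\cap\{u_i,\sigma_1(u_i)\}=\emptyset$, which forces $\hbox{supp}(\delta_s T)\subseteq\{u_i,\sigma_1(u_i)\}$). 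Since $Z_2\subseteq\bigcup_{i=1}^N E_i$ displays the Baire space $Z_2$ as a finite union of relatively closed sets, one of them (say $Z_2\cap E_1$) has non-empty interior, producing a non-empty open $V\subseteq Z_2\cap E_1$. Replacing $V$ by $V\cap\sigma_2(V)$ or $V\cup\sigma_2(V)$ I may further assume $V=\sigma_2(V)$.

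Set $I_{u_1}:=\{h\in C(K_1)^{\tau_1}:\overline{\hbox{coz}(h)}\cap\{u_1,\sigma_1(u_1)\}=\emptyset\}$. Lemma \ref{l varphi not in supp(f)} gives $T(h)|_V\equiv 0$ for every $h\in I_{u_1}$. Urysohn's lemma produces a non-zero $g\in C(K_2)^{\tau_2}$ with $\overline{\hbox{coz}(g)}\subseteq V$, and the surjectivity of $T$ furnishes $f_0\in C(K_1)^{\tau_1}$ with $T(f_0)=g$. For every $h\in I_{u_1}$ the product $g\,\overline{T(h)}$ vanishes off $V$ (because $g$ does) and on $V$ (because $T(h)$ does), so $T(f_0)\perp T(h)$ in $C(K_2)^{\tau_2}$. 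Because $T$ is a bijective orthogonality preserving linear map between commutative unital real C*-algebras, its inverse $T^{-1}$ is also orthogonality preserving, and therefore $f_0\perp h$ in $C(K_1)^{\tau_1}$ for every $h\in I_{u_1}$. For each $t\in K_1\setminus\{u_1,\sigma_1(u_1)\}$ Urysohn's lemma yields an $h\in I_{u_1}$ with $h(t)\neq 0$, and $f_0(t)\overline{h(t)}=0$ forces $f_0(t)=0$; continuity of $f_0$ together with the non-isolation of $u_1$ in $K_1$ then forces $f_0(u_1)=f_0(\sigma_1(u_1))=0$ as well, so $f_0\equiv 0$ and $g=T(f_0)=0$, a contradiction.

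The main technical obstacle I anticipate is the claim that $T^{-1}$ is orthogonality preserving. The analogous statement for complex $C(K)$-spaces is classical (Jarosz \cite{Jar}), but in the real $C(K)^{\tau}$ setting a dedicated argument is required, exploiting the surjectivity of $T$ together with the single-class structure of $\hbox{supp}(\delta_s T)$ established in Proposition \ref{l tech 1}$(a)$; this is the step I expect to require the most care.
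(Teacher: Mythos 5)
Your overall strategy (show $Z_2=\emptyset$, then invoke Proposition \ref{c boundedness of varphi on points of discontinuity 2}$(c)$ for a uniform bound) matches the paper, and several of your intermediate steps are sound: the sets $E_i$ are indeed closed and $\sigma_2$-invariant, the Baire category argument inside the open set $Z_2$ is legitimate, and the final Urysohn argument showing that a function vanishing off $\{u_1,\sigma_1(u_1)\}$ must vanish identically (because these points are non-isolated) is correct. However, there is a genuine gap at exactly the step you flag yourself: the claim that $T^{-1}$ is orthogonality preserving. This is not available. It is not proved anywhere in the paper for general $C(K)^{\tau}$-spaces (Theorem \ref{p inverse preserves invertible} only covers the $C_r(K;F)$ case, and Remark \ref{remark C(K)tau not isomorphic to C(K,f)} shows that not every $C(K)^{\tau}$ is of that form), and in the classical complex setting the fact that a separating bijection is biseparating is a \emph{consequence} of automatic continuity and the resulting weighted-composition representation, not an input to it. Corollary \ref{c surjective bio isomorphism new} treats ``bi-orthogonality preserving'' as a strictly stronger hypothesis for the same reason. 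Using biseparation to prove continuity therefore inverts the logical order, and your proof is incomplete precisely where all the difficulty is concentrated.

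The paper's proof shows how to avoid this entirely, and your own construction can be repaired the same way. Instead of deducing $f_0\perp h$ from $T(f_0)\perp T(h)$, one proves directly that the preimage $h=T^{-1}(g)$ of a function $g$ supported in $Z_2$ vanishes on the dense set $\mathcal{S}\hbox{upp}(Z_1)$: for $t_0\in\hbox{supp}(\delta_s T)$ with $s\in Z_1$, one separates $t_0$ from the finite set $\mathcal{S}\hbox{upp}(Z_2)$ by a real-valued symmetric bump $k$ with $k(t_0)=1$, and then uses the representation $\delta_s T(f)=T(1)(s)\,\Re\hbox{e}\,\delta_{t_s}(f)+\vartheta(s,t_s)\,\Im\hbox{m}\,\delta_{t_s}(f)$ from Proposition \ref{l density and kernels} to compute $\delta_s T(kh)=k(t_s)\,\delta_s T(h)=k(t_s)\,g(s)=0$ for all $s\in Z_1$, while Lemma \ref{l varphi not in supp(f)} gives $\delta_s T(kh)=0$ for $s\in Z_2$; injectivity then yields $kh=0$ and so $h(t_0)=0$. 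Density of $\mathcal{S}\hbox{upp}(Z_1)$ (which follows from Lemma \ref{l varphi not in supp(f)} together with the finiteness and non-isolation of $\mathcal{S}\hbox{upp}(Z_2)$) finishes the argument with no appeal to $T^{-1}$. Note also that once this route is taken, your Baire category localization to a single $E_i$ becomes unnecessary.
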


\begin{proof} Let $T: C(K_1)^{\tau_1}\to C(K_2)^{\tau_2}$ be an orthogonality preserving linear bijection. We shall prove that $Z_2 :=\{ s\in K_2 : \delta_{s} T \hbox{ is unbounded }\}=\emptyset$. Suppose, on the contrary, that $Z_2\neq \emptyset$. Since the set $Z_1$ is closed (see Proposition \ref{c boundedness of varphi on points of discontinuity 2}$(d)$) the set $Z_2$ is an non-empty open subset of $K_2$ with $\sigma_2 (Z_2)=Z_2$. Find, via Urysohn's lemma, a non-zero function $g\in C(K_2)^{\tau_2}$ with coz$(g)\subseteq Z_2$.  By the surjectivity of $T$ there exists $0\neq h\in C(K_1)^{\tau_1}$ satisfying $T(h) = g$. According to the notation above, we set $\displaystyle \mathcal{S}\hbox{upp}(K_2) =\bigcup_{s\in K_2} \hbox{supp}(\delta_s T)$, $\displaystyle \mathcal{S}\hbox{upp}(Z_1) =\bigcup_{s\in Z_1} \hbox{supp}(\delta_s T)$, and $\displaystyle \mathcal{S}\hbox{upp}(Z_2) =\bigcup_{s\in Z_2} \hbox{supp}(\delta_s T)$. We claim that \begin{equation}\label{eq h vanishes on suppZ1} h(t) =0 \hbox{, for every } t\in \mathcal{S}\hbox{upp}(Z_1).
\end{equation}

To prove the claim, let $t_0$ be an element in $\mathcal{S}\hbox{upp}(Z_1)$. Since $\mathcal{S}\hbox{upp}(Z_2)$ is a $\sigma_1$-symmetric finite set in $K_1$ disjoint from $\mathcal{S}\hbox{upp}(Z_1)$, there exist disjoint open sets $U_1,U_2\subseteq K_1$ and a function $k\in C(K_1)^{\tau_1}$ such that $\sigma_1 (U_j)=U_j$ for every $j=1,2$, $t_0\in U_1$, $\mathcal{S}\hbox{upp}(Z_2) \subseteq U_2$, $k^* =k$ (i.e. $k(K_1)\subseteq \mathbb{R}$), $k(t_0)=1$, and coz$(k)\subseteq U_1$. We claim that $T(k h) =0.$ Indeed, for $s\in Z_2$, since $\mathcal{S}\hbox{upp}(Z_2)\cap \overline{\hbox{coz} (k)}=\emptyset$, Lemma \ref{l varphi not in supp(f)} implies that $T(k h) (s) = \delta_s T(kh) =0$. For each $s\in Z_1$, and $t_s\in \hbox{supp} (\delta_s T)$, $k(t_s)\in \mathbb{R}$ and hence $k h, k(t_s) h\in C(K_1)^{\tau_1}$ with $(k h) (t_s) =  (k(t_s) h) (t_s)$. By Proposition \ref{l density and kernels} we also have: $$\delta_s T (kh) = T(1) (s) \Re\hbox{e}\delta_{t_s} (k h) + \vartheta (s,t_s) \Im\hbox{m}\delta_{t_s} (kh),$$
$$=T(1) (s) \Re\hbox{e}\delta_{t_s} (k(t_s) h) + \vartheta (s,t_s) \Im\hbox{m}\delta_{t_s} (k(t_s) h) $$ $$= \delta_s T(k(t_s) h) = k(t_s) \delta_s T(h) =k(t_s) \delta_s (g) =0,$$ because $s\in Z_1$ and coz$(g)\subseteq Z_2$. This proves the second claim, and hence $T(k h) =0.$ The injectivity of $T$ implies $k h =0$ and hence $0=(k h) (t_0) = h(t_0)$, which completes the proof of the first claim.\smallskip

By Lemma \ref{l varphi not in supp(f)} the set $\displaystyle \mathcal{S}\hbox{upp}(K_2)$ is dense in $K_1$. The subset $\mathcal{S}\hbox{upp}(Z_2)$ is finite and every point in $\mathcal{S}\hbox{upp}(Z_2)$ is non-isolated in $K_1$ (compare Corollary \ref{c boundedness of varphi on points of discontinuity}$(a)$). Since $\mathcal{S}\hbox{upp}(K_2) = \mathcal{S}\hbox{upp}(Z_2) \stackrel{\circ}{\cup} \mathcal{S}\hbox{upp}(Z_1)$. It is not hard to see, from the normality of $K_1$, that $\mathcal{S}\hbox{upp}(Z_1)$ must be also dense in $K_1$. Since, by \eqref{eq h vanishes on suppZ1}, $h(t) =0$ for every $t\in \mathcal{S}\hbox{upp}(Z_1)$, we deduce from the continuity of $h$ that $h=0$, and hence $0=T(h) =g$, which gives the final contradiction.
\end{proof}

A generalized version of \cite[Corollary 3.10]{GarPe2014} for real C$^*$-algebras read as follows:

\begin{corollary}\label{c surjective bio isomorphism new} The following statements are equivalent: \begin{enumerate}[$(a)$]\item There exists a bi-orthogonality preserving linear bijection $T: C(K_1)^{\tau_1}\to C(K_2)^{\tau_2}$;
\item There exists a real C$^*$-isomorphism $S: C(K_1)^{\tau_1}\to C(K_2)^{\tau_2}$;
\item There exists a C$^*$-isomorphism $\widetilde{S}: C(K_1)\to C(K_2)$;
\item $K_1$ and $K_2$ are homeomorphic.$\hfill\Box$
\end{enumerate}
\end{corollary}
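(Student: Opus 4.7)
The plan is to prove the cycle $(b)\Rightarrow(c)\Rightarrow(d)\Rightarrow(b)$ by classical methods and then establish the genuinely new equivalence $(a)\Leftrightarrow(b)$ using Theorem~\ref{t automatic cont bijective OP}. In $(d)$, ``homeomorphic'' must be understood in the equivariant sense, i.e., there is a homeomorphism $\phi\colon K_2\to K_1$ satisfying $\sigma_1\circ\phi=\phi\circ\sigma_2$; otherwise the example $K_1=K_2=\{1,2\}$ with $\sigma_1$ the swap and $\sigma_2=\mathrm{id}$ already shows $(d)\not\Rightarrow(b)$.

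The routine implications: $(b)\Rightarrow(c)$ follows by complexification, using $C(K_i)=C(K_i)^{\tau_i}\oplus i\,C(K_i)^{\tau_i}$ to extend $S$ as $\widetilde{S}(a+ib):=S(a)+iS(b)$. $(c)\Leftrightarrow(d)$ is Gelfand--Naimark duality, where the equivariance records the $\tau$-action. $(d)\Rightarrow(b)$ is the assignment $S(f):=f\circ\phi$, which maps $C(K_1)^{\tau_1}$ into $C(K_2)^{\tau_2}$ precisely because $\phi$ is equivariant. And $(b)\Rightarrow(a)$ is immediate, since any real $\ast$-isomorphism preserves products and adjoints and therefore orthogonality in both directions.

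The substantive direction is $(a)\Rightarrow(b)$. Given a bi-orthogonality preserving linear bijection $T$, Theorem~\ref{t automatic cont bijective OP} applied to both $T$ and $T^{-1}$ gives that each is continuous; in particular $Z_2=\emptyset$ and Proposition~\ref{l density and kernels} applies at every $s\in K_2$. Combining Proposition~\ref{l tech 1}$(a)$ and Corollary~\ref{c varphi injective}$(b)$ for $T$ and $T^{-1}$ yields a bijection $[\varphi]\colon[K_2]\to[K_1]$, $[s]\mapsto\hbox{supp}(\delta_sT)$, whose inverse is the corresponding map built from $T^{-1}$. Continuity of $[\varphi]$ is a Urysohn argument: if $[s_n]\to[s]$ but $[\varphi][s_n]$ clusters at some $[t']\neq[\varphi][s]$, a $\sigma_1$-symmetric $f\in C(K_1)^{\tau_1}$ vanishing near $[\varphi][s]$ but with $f(t')\neq0$ forces $\delta_sT(f)=0$ by Lemma~\ref{l varphi not in supp(f)} and Proposition~\ref{l density and kernels}, while the explicit form of $\delta_{s_n}T$ together with the continuity of $T(f)$, $T(1)$, and the boundedness of $\vartheta$ (Proposition~\ref{c boundedness of varphi on points of discontinuity 2}) keeps $|T(f)(s_n)|$ bounded away from zero along a subsequence, a contradiction. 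The analogous argument for $[\varphi]^{-1}$ then makes $[\varphi]$ a homeomorphism between the compact Hausdorff quotients.

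The final, most delicate step is lifting $[\varphi]$ to an equivariant homeomorphism $\phi\colon K_2\to K_1$. Singleton orbits go to singleton orbits, so $[\varphi]([F_2])=[F_1]$ and the lift on $F_2$ is forced. Off the fixed-point sets the quotient map is a free two-sheeted cover and a branch of $\phi$ on each connected component of $K_2\setminus F_2$ is selected using Proposition~\ref{c boundedness of varphi on points of discontinuity 2}$(a)$: the coefficient $\mu_s$ from Proposition~\ref{l density and kernels} is realized as $T(g)(s)$ for a fixed $g\in C(K_1)^{\tau_1}$ locally equal to $i$ near a reference preimage $t_{s_0}$, hence depends continuously on $s$; swapping $t_s\leftrightarrow\sigma_1(t_s)$ flips $\mu_s\mapsto-\mu_s$ (because $\Im f(\sigma_1(t_s))=-\Im f(t_s)$ for $f\in C(K_1)^{\tau_1}$), which pins down a unique branch on each local sheet. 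The main obstacle is verifying that these local branches assemble into a global equivariant homeomorphism across the two-sheeted cover and extend continuously across $F_2$; once this is done, $S(f):=f\circ\phi$ furnishes the required real C$^*$-isomorphism and closes the cycle.
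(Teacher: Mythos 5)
The paper offers no proof of this corollary---it is stated with a box, as an immediate consequence of Theorem \ref{t automatic cont bijective OP} and the preceding machinery---so there is nothing to compare your argument against line by line. Your preliminary observation is correct and valuable: as literally written, $(d)\Rightarrow(b)$ fails, and the paper's own Remark \ref{remark C(K)tau not isomorphic to C(K,f)} essentially contains your two-point counterexample ($\mathbb{C}_{\mathbb{R}}$ versus $\mathbb{R}\oplus^\infty\mathbb{R}$, same $K$, different $\sigma$). So $(c)$ and $(d)$ must indeed be read equivariantly, as you propose. The cycle $(b)\Rightarrow(c)\Rightarrow(d)\Rightarrow(b)$ and the implication $(b)\Rightarrow(a)$ are routine and fine.

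The genuine gap is exactly where you place it, and it cannot be deferred, because it carries the entire content of $(a)\Rightarrow(b)$. From $T$ you extract only a homeomorphism $[\varphi]\colon[K_2]\to[K_1]$ of orbit spaces (matching fixed orbits to fixed orbits) and then propose to lift it to a $\sigma$-equivariant homeomorphism $K_2\to K_1$. Such a lift does not exist in general: $K_1=\mathbb{T}$ with $\sigma_1(z)=-z$ and $K_2=\mathbb{T}\sqcup\mathbb{T}$ with the swap are free involutions with homeomorphic orbit spaces and empty fixed-point sets, yet $K_1$ and $K_2$ are not homeomorphic. Hence the lift must be manufactured from additional structure of $T$, namely that the locally defined, sign-ambiguous branches $s\mapsto t_s$ (pinned down through $\mu_s$, which flips sign when $t_s$ is replaced by $\sigma_1(t_s)$) patch into a single global continuous section; your text names this ``the main obstacle'' and leaves it unresolved, so the only nontrivial implication remains unproved. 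Two further soft spots: Corollary \ref{c varphi injective}$(b)$ gives injectivity of $[\varphi]$ only at points with $\sigma_2(s)\neq s$, and at $\sigma_2$-fixed points $\delta_sT$ is real-valued, so $\{\lambda_s,\mu_s\}$ is no longer a basis of $\mathbb{C}_{\mathbb{R}}$ and the argument of Corollary \ref{c varphi injective}$(a)$ breaks down there; and surjectivity of $[\varphi]$ via $T^{-1}$ requires verifying that $\hbox{supp}(\delta_t T^{-1})$ is genuinely inverse to $\hbox{supp}(\delta_s T)$, which you assert without proof.
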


\end{document}